\theoremstyle{plain}
\newtheorem{theorem}{Theorem}[section]
\newtheorem{proposition}[theorem]{Proposition}
\theoremstyle{remark}
\newtheorem{hypothesis}[theorem]{Hypothesis}
\newtheorem{remark}[theorem]{Remark}
 \numberwithin{equation}{section}
\begin{document}

\allowdisplaybreaks

\title[Dirichlet-to-Neumann semigroup for an eigenvalue problem]
{Dirichlet-to-Neumann semigroup with respect to a general second order eigenvalue problem}

\author{Jamil Abreu}

\address{Departamento de Matem\'atica\\
Universidade Federal de S\~ao Carlos, Rodovia Washington Lu\'is \\ S\~ao Carlos SP \\ Brazil}
\email{jamil@dm.ufscar.br}

\author{\'Erika Capelato}

\address{Departamento de Economia\\
Universidade Estadual J\'ulio de Mesquita, Rodovia Araraquara-Ja\'u, km 1 \\ Araraquara SP \\ Brazil}
\email{erika@fclar.unesp.br}


\keywords{Form methods, Dirichlet-to-Neumann operator, sub-Markovian operator, irreducible operator, positive operator}

\subjclass[2010]{Primary: 47A07, 47D06 Secondary: 47A55}

\date\today

\begin{abstract}
In this paper we study a Dirichlet-to-Neumann operator with respect to a second order elliptic operator with measurable 
coefficients, including first order terms, namely, the operator on $L^2(\partial\Omega)$ given by $\varphi\mapsto \partial_{\nu}u$ 
where $u$ is a weak solution of 
\begin{equation}
\left\{
\begin{aligned}
-{\rm div}\, (a\nabla u) +b\cdot \nabla u -{\rm div}\, (cu)+du & =\lambda u   \ \ \text{on}\ \Omega,\\
u|_{\partial\Omega}    & =\varphi .
\end{aligned}
\right.
\end{equation}
Under suitable assumptions on the matrix-valued function $a$, on the vector fields $b$ and $c$, and on the function $d$, we investigate positivity, 
sub-Markovianity, irreducibility and domination properties of the associated Dirichlet-to-Neumann semigroups. 
\end{abstract}

\maketitle

\section{Introduction}\label{sec:intro}

Form methods for evolution equations date back to the pioneering works by D. Hilbert on integral equations in the beginning of the 
twentieth century. However, it was not until the late 1950s that they have been systematically developed towards its applications to evolution 
equations. At this early period two schools have emerged, one centered around J.-L. Lions (elliptic forms) and other around T. Kato 
(sectorial forms). Both notions (elliptic and sectorial forms) turn out to be essentially equivalent, being different descriptions of 
the same ideas. Standard references for both theories include R. Dautray and J.-L. Lions's book \cite{dautray1992mathematical-5} and 
T. Kato's book \cite{kato1976perturbation}; more recent developments have been documented in E.-M. Ouhabaz's book 
\cite{ouhabaz2005analysis}. 

In a recent paper, W. Arendt and A.F.M. ter Elst \cite{arendt2012sectorial} have extended the classical form method in many respects. In the case 
these authors call the `complete case', which corresponds to Lions's elliptic forms, the 
form domain $V$ is allowed to be a Hilbert space (over $\mathbb{K}=\mathbb{R}$ or $\mathbb{C}$) not necessarily embedded in the reference space, say $H$, 
provided there is a bounded linear operator $j:V\to H$ with dense range; if $\mathfrak{a}:V\times V\to \mathbb{K}$ is a continuous sesquilinear form 
which is $j$-elliptic in the sense that 
\begin{equation}\label{eq:form-j_elliptic}
{\rm Re}\, \mathfrak{a}(u,u)+\omega\|j(u)\|_H^2\geqslant \alpha\|u\|_V^2\ \ \ (u\in V)
\end{equation}
for some constants $\omega\in \mathbb{R}$ and $\alpha>0$, then an operator $A$ on $H$ can be associated to $\mathfrak{a}$ in such a way that 
\begin{multline}\label{eq:associated_operator}
x\in \mathscr{D}(A) \ \text{and} \ Ax=f \ \text{if, and only if}\\
x=j(u) \ \text{for some} \ u\in V \ \text{and} \ \mathfrak{a}(u,v)=(f|j(v))_H \ \text{for all}\  v\in V.
\end{multline}
A further consequence for the so called `incomplete case', which corresponds to Kato's sectorial forms, is that an $m$-sectorial operator 
(and therefore, a holomorphic semigroup generator) can be associated to a densely defined sectorial form, regardless it is closable or not. 

This new form method allows an elegant treatment of the so-called Dirichlet-to-Neumann operator. Let $\Omega\subset\mathbb{R}^N$ be a bounded open 
set with Lipschitz boundary. By definition, the Dirichlet-to-Neumann operator is the 
operator $D_0$ acting on $L^2(\partial\Omega)$ with the property that $\varphi\in \mathscr{D}(D_0)$ and 
$D_0\varphi =h$ if, and only if there is a weak solution $u\in H^1(\Omega)$  of
\begin{equation}
\left\{
\begin{aligned}
\Delta u & =0   \ \ \text{on}\ \Omega,\\
u|_{\partial\Omega}    & =\varphi ,
\end{aligned}
\right.
\end{equation}
such that $\partial_\nu u =h$ in a weak sense; an element $u\in H^1(\Omega)$ with distributional Laplacian $\Delta u\in L^2(\Omega)$ is said to have a 
weak normal derivative if there exists $h\in L^2(\partial\Omega)$ such that Green's formula holds, meaning that the identity
\begin{equation}\label{eq:derivada_normal_fraca}
\int_{\Omega}(\Delta u)\overline{v}\, dx+\int_{\Omega}\nabla u\cdot \overline{\nabla v}\, dx=\int_{\partial\Omega}h \overline{v}\, d\sigma
\end{equation}
holds for every $v\in H^1(\Omega)$. In this case we set $\partial_\nu u:=h$. By showing that $D_0$ is associated with a $j$-elliptic form, namely, 
the classical Dirichlet form 
$$
\mathfrak{a}(u,v)=\int_\Omega \nabla u\cdot \overline{\nabla v}\, dx\ \ \ (u,v\in H^1(\Omega))
$$
with $j:H^1(\Omega)\to L^2(\partial\Omega)$ being the trace, Arendt \& ter Elst have provided an interesting application of their theory where a 
non-injective $j$ appears in a natural way.

In this paper we study the Dirichlet-to-Neumann operator, to be denoted by $D_\lambda^{\mathscr{A}}$, defined by 
$\varphi\mapsto \partial_\nu u$ where $u\in H^1(\Omega)$ is a weak solution of the eigenvalue problem
\begin{equation}\label{eq:eigenvalue-problem-DtN-perturb}
\left\{
\begin{aligned}
-{\rm div}\, (a\nabla u) +b\cdot \nabla u -{\rm div}\, (cu)+du & =\lambda u   \ \ \text{on}\ \Omega,\\
u|_{\partial\Omega}    & =\varphi ,
\end{aligned}
\right.
\end{equation}
and $\partial_\nu u$ is the `weak conormal derivative' which, in the smooth case, coincides with the classical conormal derivative 
$(a\nabla u +cu)\cdot \nu$. In Problem \eqref{eq:eigenvalue-problem-DtN-perturb}, $a$ is a matrix-valued function, $b$ and $c$ are vector fields, 
$d$ is measurable function and $\lambda$ is a number; for the precise hypotheses on these data, see Theorem \ref{thm:main} below. The 
difficulty here lies, of course, in the presence of the first order terms `$b\cdot \nabla u$' and 
`${\rm div}\, (cu)$'. To the best of our knowledge, Dirichlet-to-Neumann operators whose internal dynamics includes first order terms have been 
first considered in \cite{behrndt2012inverse} in connection with Calder\'on's inverse problem which asks, roughly speaking, 
whether $\mathscr{A}$ can be determined from $D_\lambda^{\mathscr{A}}$. Following Arendt \& ter Elst approach to the Dirichlet-to-Neumann 
operator $D_0$ through form methods it is clear that $D_\lambda^{\mathscr{A}}$ should be, at best, 
the associated operator, in the sense of \eqref{eq:associated_operator}, to the sesquilinear form 
$\mathfrak{a}_\lambda:H^1(\Omega)\times H^1(\Omega)\to \mathbb{K}$ defined by
\begin{equation}\label{eq:form-dtn-perturb}
\mathfrak{a}_\lambda(u,v)=\int_\Omega a\nabla u\cdot \overline{\nabla v}\, dx+\int_\Omega (b\cdot\nabla u)\overline{v}\, dx
+\int_\Omega u (c\cdot \overline{\nabla v})\, dx+\int_\Omega du\overline{v}\, dx-\lambda\int_\Omega u\overline{ v}\, dx.
\end{equation}
However, as shown in \cite{arendt2012sectorial} (cf. also \cite{arendt2007spectral}), this form is not $j$-elliptic in general, even when $a=I$, 
$b=c=0$ and $d=0$. This lack of ellipticity can be circumvented by a general procedure. Roughly speaking, to any sesquilinear form 
$\mathfrak{a}:V\times V\to \mathbb{K}$ an 
m-sectorial operator $A$ can still be associated to $\frak{a}$ in the sense of \eqref{eq:associated_operator} provided $\frak{a}$ is $j$-elliptic on a 
suitable closed subspace of $V$ which complements $\mathscr{N}(j)$; the precise statement will be recalled below in Proposition 
\ref{prop:associated_operator-V-kerj-Vj}. Moreover, the recent theory of `compactly elliptic forms' introduced in Arendt et al 
\cite{arendt2014dirichlet} makes this task even easier and we briefly describe how this theory can also be used in the construction of 
our Dirichlet-to-Neumann operator.

The present work is motivated by some results in \cite{arendt2012friedlander} (cf. also \cite{arendt2007spectral}) where it has been shown, among other 
things, that the semigroup generated by the Dirichlet-to-Neumann operator with respect to the eigenvalue problem 
\begin{equation}\label{eq:eigenvalue-problem-DtN}
\left\{
\begin{aligned}
-\Delta u  & =\lambda u   \ \ \text{on}\ \Omega,\\
u|_{\partial\Omega}    & =\varphi ,
\end{aligned}
\right.
\end{equation}
which corresponds to Problem \eqref{eq:eigenvalue-problem-DtN-perturb} with $a=I$, $b=c=0$ and $d=0$, is positive and irreducible whenever $\lambda<\lambda_1^\text{D}$, 
$\lambda_1^\text{D}$ being the first eigenvalue of the Dirichlet Laplacian given in variational terms by 
\begin{equation}\label{eq:first_eigenvalue-D_Laplacian}
\lambda_1^\text{D}=\inf_{u\in H^1_0(\Omega),u\neq 0}\frac{\int_\Omega |\nabla u|^2\, dx}{\int_\Omega |u^2|\, dx}.
\end{equation}
The question whether this semigroup remains positive or not for 
$\lambda>\lambda_1^\text{D}$ is a major research topic; recent contributions to this issue include e.g. the paper by D. Daners \cite{daners2014non}. 
Here we do not address this question but focus on the problem whether some properties, having positivity at their center, 
of the Dirichlet-to-Neumann semigroup is preserved under first order perturbations of Problem \eqref{eq:eigenvalue-problem-DtN}. With little 
additional effort we can also describe when these semigroups are in fact sub-Markovian. Moreover, we also consider irreducibility and 
some domination properties. Some of these questions have been also studied, in connection with Calder\'on's problem, in \cite{ouhabaz2016milder}. 

Let $e^{-tD_\lambda^\mathscr{A}}$ be the semigroup on $L^2(\partial\Omega)$ generated by $-D_\lambda^\mathscr{A}$. 
In the following, $A_\text{D}$ denotes the realization of $\mathscr{A}$ with Dirichlet boundary conditions (see next section). For 
simplicity, we also consider real scalars.

\begin{theorem}\label{thm:main}
Let $\Omega\subset \mathbb{R}^N$ be a bounded connected open set with Lipschitz boundary. Suppose the matrix-valued 
function $a\in L^\infty(\Omega ;\mathbb{R}^{N\times N})$ is symmetric and uniformily positive-definite in the sense that, for some $\kappa>0$,
\begin{equation}\label{eq:elipticity_condition-matrix}
a(x)\xi \cdot\xi\geqslant \kappa |\xi|^2\ \ \ (\xi\in \mathbb{R}^N, \text{a.e.}\ x\in \Omega).
\end{equation}
Suppose the vector fields $b,c\in C^1(\overline{\Omega})^N$ are real and satisfy ${\rm div}\, b ={\rm div}\, c= 0$ and 
$b\cdot \nu= c\cdot \nu= 0$. Let $d\in L^\infty(\Omega)$ be real-valued. Suppose $\lambda\in \mathbb{R}\backslash\sigma(A_\text{D})$.
\begin{enumerate}
\item\label{item:semigroup-DtN-positive} If 
$4\kappa^{-1}\|b-c\|^2_{L^\infty(\Omega)^N}+\|d^-\|_{L^\infty(\Omega)}+\lambda <\kappa\lambda_1^\text{D}$ then $e^{-tD_\lambda^\mathscr{A}}$ 
is positive.
\item\label{item:semigroup-DtN-submarkovian} If 
$4\kappa^{-1}\|b-c\|^2_{L^\infty(\Omega)^N}+\|d^-\|_{L^\infty(\Omega)}+\lambda <\kappa\lambda_1^\text{D}$ and 
$\lambda\leqslant d$ then $e^{-tD_\lambda^\mathscr{A}}$ is sub-Markovian.
\item\label{item:semigroup-DtN-irreducible} If $\|d^-\|_{L^\infty(\Omega)}+\lambda <\kappa\lambda_1^\text{D}$ then $e^{-tD_\lambda^\mathscr{A}}$ is 
irreducible.
\item\label{item:semigroup-DtN-domination} If $b=c$ and $\lambda_2\leqslant \lambda_1<\kappa\lambda_1^\text{D}-\|d^-\|_{L^\infty(\Omega)}$ then 
$0\leqslant e^{-tD_{\lambda_2}^\mathscr{A}}\leqslant e^{-tD_{\lambda_1}^\mathscr{A}}$ in the sense of positive operators, i.e.
$$
0\leqslant e^{-tD_{\lambda_2}^\mathscr{A}}\varphi\leqslant e^{-tD_{\lambda_1}^\mathscr{A}}\varphi\ \ \ (t>0, \ 0\leqslant \varphi\in L^2(\partial\Omega)).
$$
\end{enumerate}
\end{theorem}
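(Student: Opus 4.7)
The plan is to apply the form method in the $j$-elliptic framework of \cite{arendt2012sectorial} (Proposition~\ref{prop:associated_operator-V-kerj-Vj}) with $V = H^1(\Omega)$, $H = L^2(\partial\Omega)$ and $j$ the trace operator, and to derive the four statements from the standard invariance criteria for closed convex sets. Since $\lambda \notin \sigma(A_\text{D})$, every $\varphi \in H^{1/2}(\partial\Omega)$ admits a unique weak solution $u_\varphi$ of $\mathscr{A}u = \lambda u$ with trace $\varphi$, giving rise to a closed complement
\[
V_\lambda = \{u \in H^1(\Omega) : \mathfrak{a}_\lambda(u,\psi) = 0 \text{ for all } \psi \in H^1_0(\Omega)\}
\]
of $\mathscr{N}(j) = H^1_0(\Omega)$ in $V$. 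The starting point for $j$-ellipticity on $V_\lambda$ is the integration by parts identity $\int_\Omega (b\cdot\nabla u)\, u\, dx = \frac{1}{2}\int_\Omega b\cdot\nabla(u^2)\, dx = 0$, valid for real $u\in H^1(\Omega)$ by $\mathrm{div}\, b = 0$ and $b\cdot\nu = 0$ (and analogously for $c$), which kills the first-order contributions to $\mathrm{Re}\,\mathfrak{a}_\lambda(u,u)$. Combined with \eqref{eq:elipticity_condition-matrix}, Poincar\'e's inequality with constant $\lambda_1^\text{D}$, and Young's inequality applied to the surviving $b-c$ cross term (which is the source of the factor $4\kappa^{-1}\|b-c\|^2_{L^\infty}$), the smallness hypotheses yield $j$-ellipticity on $V_\lambda$, so the associated operator $D_\lambda^{\mathscr{A}}$ generates a $C_0$-semigroup on $L^2(\partial\Omega)$.

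For items (a) and (b) I would apply the invariance criterion from \cite{arendt2012sectorial}: $e^{-tD_\lambda^{\mathscr{A}}}$ leaves a closed convex set $C\subset L^2(\partial\Omega)$ with projection $P$ invariant iff for every $u\in V$ there exists $w\in V$ with $j(w) = P(j(u))$ and $\mathrm{Re}\,\mathfrak{a}_\lambda(w, u-w) \geq 0$. For positivity, with $C = L^2(\partial\Omega)_+$ and $P(\varphi) = \varphi^+$, the natural choice is $w = u^+$ (admissible since the positive part commutes with the trace on $H^1(\Omega)$). A pointwise argument using $u^+u^- = 0$ and the disjoint supports of $\nabla u^+$ and $\nabla u^-$ shows that each of the four terms in $\mathfrak{a}_\lambda(u^+, u^-)$ vanishes, so the criterion is met. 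For sub-Markovianity, invariance of $\{\varphi\leq 1\}$ with $w = u\wedge 1$ produces, after the same cancellations, a residual term $\int_{\{u>1\}}(d-\lambda)(u-1)\,dx$, which is nonnegative precisely under the assumption $\lambda \leq d$.

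For (c) I would characterize the invariant closed ideals of $L^2(\partial\Omega)$ as $\{\varphi : \varphi = 0 \text{ on }F\}$ for some measurable $F \subset \partial\Omega$ and apply the invariance criterion; using the strong maximum principle and unique continuation for $\mathscr{A}u = \lambda u$ on the connected set $\Omega$, one concludes that $F$ must be $\sigma$-null. The weaker hypothesis suffices because the first-order terms do not affect the maximum principle once coercivity of $\mathfrak{a}_\lambda$ on $H^1_0(\Omega)$ is in place. For (d), the assumption $b = c$ renders $\mathfrak{a}_\lambda$ symmetric (so that $D_\lambda^{\mathscr{A}}$ is self-adjoint), and Ouhabaz's domination criterion reduces to checking $\mathfrak{a}_{\lambda_2}(u,v) - \mathfrak{a}_{\lambda_1}(u,v) = (\lambda_1 - \lambda_2)\int_\Omega uv\,dx \geq 0$ for $\lambda_2 \leq \lambda_1$ and $u, v \geq 0$, which is immediate. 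The chief technical obstacle throughout (a)--(b) is ensuring that the natural test functions $u^+$ and $u \wedge 1$ are admissible in the invariance criterion when $\mathfrak{a}_\lambda$ is only $j$-elliptic on the proper subspace $V_\lambda \subsetneq V$; if the version of the criterion demands $w \in V_\lambda$ rather than merely $w \in V$, one must either approximate $u^+$ by elements of $V_\lambda$ or invoke the more flexible compactly elliptic forms framework of \cite{arendt2014dirichlet}.
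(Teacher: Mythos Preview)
Your proposal has a genuine gap in parts (a) and (b), and you have essentially identified it yourself at the end --- but you have not realized that resolving it is the \emph{entire} content of the proof and is precisely where the smallness hypothesis enters.

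You propose to take $w=u^{+}$ and observe that $\mathfrak{a}_{\lambda}(u^{+},u^{-})=0$ by disjoint supports. That computation is correct, but it proves too much: it would yield positivity for \emph{every} $\lambda\notin\sigma(A_{\mathrm D})$, and this is known to be false (cf.\ \cite{daners2014non}). The reason your argument does not go through is exactly the obstacle you flag: the invariance criterion of Proposition~\ref{prop:Ouhabaz-invariance_criteria} applies only to a $j$-elliptic form, and $\mathfrak{a}_{\lambda}$ is $j$-elliptic only on $V_{j}(\mathfrak{a}_{\lambda})$, not on all of $H^{1}(\Omega)$. So one must take $u\in V_{j}(\mathfrak{a}_{\lambda})$ and produce $w\in V_{j}(\mathfrak{a}_{\lambda})$; since $u^{+}\notin V_{j}(\mathfrak{a}_{\lambda})$ in general, the paper decomposes $u^{+}=u_{0}+u_{1}$ and $u^{-}=u_{0}+u_{2}$ with $u_{0}\in H^{1}_{0}(\Omega)$ and $u_{1},u_{2}\in V_{j}(\mathfrak{a}_{\lambda})$, and then must verify the criterion for $w=u_{1}$. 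Expanding $\mathfrak{a}_{\lambda}(u_{1},u_{2})$ in terms of $u^{\pm}$ and $u_{0}$, the cross term $-\mathfrak{a}_{\lambda}(u_{0},u_{2})$ is \emph{not} zero; using the skew-symmetry of $b\cdot\nabla$ and $c\cdot\nabla$ it equals $2\int_{\Omega}((b-c)\cdot\nabla u_{2})u_{0}\,dx$, and it is the Young-inequality control of \emph{this} term, absorbed against $-\mathfrak{a}_{\lambda}(u_{0},u_{0})$ via the definition of $\lambda_{1}^{\mathrm D}(\mathfrak{a})$, that produces the condition $4\kappa^{-1}\|b-c\|_{\infty}^{2}+\lambda<\lambda_{1}^{\mathrm D}(\mathfrak{a})$. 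You have attributed the smallness hypothesis to $j$-ellipticity on $V_{\lambda}$, but in fact $j$-ellipticity on $V_{j}(\mathfrak{a}_{\lambda})$ holds for every $\lambda\notin\sigma(A_{\mathrm D})$ with no smallness assumption at all (Proposition~\ref{prop:operator-DtN}, via Lions's lemma). Neither ``approximating $u^{+}$ by elements of $V_{\lambda}$'' nor the compactly elliptic framework bypasses this computation.

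Two smaller points. For (d) you write $\mathfrak{a}_{\lambda_{2}}(u,v)-\mathfrak{a}_{\lambda_{1}}(u,v)=(\lambda_{1}-\lambda_{2})\int_{\Omega}uv\,dx$, but the embedded forms $\mathfrak{b}_{\lambda_{1}}$ and $\mathfrak{b}_{\lambda_{2}}$ are defined through \emph{different} harmonic extensions $u_{1}\in V_{j}(\mathfrak{a}_{\lambda_{1}})$ and $u_{2}\in V_{j}(\mathfrak{a}_{\lambda_{2}})$ of the same boundary datum $\varphi$; the paper uses the identity $\mathfrak{a}_{\lambda_{2}}(u_{2},v_{2})=\mathfrak{a}_{\lambda_{2}}(u_{1},v_{2})$, which relies on $b=c$ (the same $b-c$ cross term as above would otherwise obstruct it), together with $u_{1},v_{2}\geqslant 0$ proved as in part~(c). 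For (c) the paper does not use the maximum principle or unique continuation; it transfers irreducibility from $A_{\beta}$ via Krein--Rutman and the spectral correspondence in Proposition~\ref{prop:spectrum-Robin-DtN}, then shows $A_{\beta}=A_{\beta_{1}}$ by Proposition~\ref{proposition:first_eigenvalue-characterize-operators}.
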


The regularity required on the boundary $\partial\Omega$ has two purposes: first, it guarantees that elements in $H^1(\Omega)$ have a well-defined trace on 
the boundary and the trace operator $j:H^1(\Omega)\to L^2(\partial\Omega)$ is compact; second, the divergence theorem holds, that is, there is 
an outward unit normal $\nu$ defined a.e. on $\partial\Omega$ and 
\[
\int_\Omega \partial_j u\, dx =\int_{\partial\Omega}u\nu_j\, d\sigma\ \ (u\in H^1(\Omega)).
\]

Let us finish this introduction by briefly describing how the paper is organized. In Section \ref{sec:prelim} we define 
realizations of a second order operator $\mathscr{A}=-{\rm div}\, (a\nabla u) +b\cdot \nabla u -{\rm div}\, (cu)+du$ under various boundary 
conditions, which will play in Problem \eqref{eq:eigenvalue-problem-DtN-perturb} the same role as the Laplacian does 
in Problem \eqref{eq:eigenvalue-problem-DtN}. We also recall the basic definitions and relevant properties of positive, irreducible and 
sub-Markovian semigroups which are needed in the sequel. In Section \ref{sec:DtN-operator} we define the main object of study here, namely, the 
Dirichlet-to-Neumann operator with respect to Problem \eqref{eq:eigenvalue-problem-DtN-perturb}, denoted by $D_\lambda^{\mathscr{A}}$, and prove the 
analogous version of the folklore result which relates its spectrum to the spectrum of the realization of $\mathscr{A}$ with Robin boundary 
conditions. The proof of Theorem \ref{thm:main} is carried out in Section \ref{sec:DtN-semigroup}.

\section{Preliminaries}\label{sec:prelim}

Let us start by formulating the following hypothesis which we assume throughout this section. Let $\Omega\subset \mathbb{R}^N$ be an open set. Let 
$\mathbb{K}$ be either $\mathbb{R}$ or $\mathbb{C}$. 

\begin{hypothesis}\label{hypothesis}
The matrix-valued function $a\in L^\infty(\Omega ;\mathbb{C}^{N\times N})$ is Hermitian and uniformly positive-definite in the sense that
$$
\text{Re}\, a(x)\xi \cdot\overline{\xi}\geqslant \kappa |\xi|^2\ \ \ (\xi\in \mathbb{C}^N, \text{a.e.}\ x\in \Omega).
$$
The vector fields $b,c\in L^\infty(\Omega)^N$ as well the measurable function $d\in L^\infty(\Omega)$ are, possibly, complex-valued.
\end{hypothesis}

We will consider suitable realizations of the second order elliptic operator in divergence form
\begin{equation}
\begin{aligned}\label{eq:operador_eliptico}
\mathscr{A}u&=\sum_{j,k=1}^N-\partial_{j}(a_{jk}\partial_ku)+\sum_{j=1}^Nb_j\partial_ju-\sum_{j=1}^N\partial_j(c_ju)+du\\
&=-{\rm div}\, (a\nabla u) +b\cdot \nabla u -{\rm div}\, (cu)+du
\end{aligned} 
\end{equation}
on $L^2(\Omega)$ under Dirichlet and Robin boundary conditions; such realizations play the role of the Dirichlet and Robin Laplacian as we pass from 
the Dirichlet-to-Neumann operator $D_\lambda$ with respect to Problem \eqref{eq:eigenvalue-problem-DtN} to the one relative to 
Problem \eqref{eq:eigenvalue-problem-DtN-perturb}. Clearly the associated form is
\begin{equation}
\begin{aligned}\label{eq:associated_form}
\mathfrak{a}(u,v)&=\int_{\Omega} \Big(\sum_{j,k=1}^Na_{jk}\partial_ku\overline{\partial_{j}v}+\sum_{j=1}^Nb_j\partial_ju\overline{v}
+\sum_{j=1}^Nc_ju\overline{\partial_{j}v}+du\overline{v}\Big)dx\\
&=\int_\Omega (a\nabla u)\cdot \overline{\nabla v}\, dx+\int_\Omega (b\cdot\nabla u)\overline{v}\, dx
+\int_\Omega u (c\cdot \overline{\nabla v})\, dx+\int_\Omega du\overline{v}\, dx
\end{aligned}
\end{equation}
with domain $H^1(\Omega)$. For an element $u\in H^1(\Omega)$ we say that $\mathscr{A}u\in L^2(\Omega)$ if there exists an element $f\in L^2(\Omega)$, 
in which case we write $\mathscr{A}u=f$, such that 
\begin{equation}\label{eq:condicao-operador_maximal}
\int_{\Omega}f\overline{v}\, dx =\frak{a}(u,v)\ \ \ (v\in C_\text{c}^{\infty}(\Omega)).
\end{equation}

\begin{proposition}\label{prop:operator-delta-plus-nabla-H10}
Let $\Omega\subset \mathbb{R}^N$ be an open set and assume Hypothesis \ref{hypothesis}. Let $A_\text{D}$ be the operator on $L^2(\Omega)$ defined by
\begin{align*}
\mathscr{D}(A_\text{D}) & :=\{u\in H_0^1(\Omega):\mathscr{A}u \in L^2(\Omega)\},\\
A_\text{D}u & :=\mathscr{A}u\ \ \ (u\in \mathscr{D}(A_\text{D})).
\end{align*}
Then $-A_\text{D}$ is the generator of a quasi-contrative $C_0$-semigroup. If $\mathbb{K}=\mathbb{C}$ then $-A_\text{D}$ generates a cosine 
operator function and hence a holomorphic semigroup of angle $\pi/2$.
\end{proposition}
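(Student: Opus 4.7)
The plan is to realize $A_\text{D}$ via Lions's form method, applied to $\mathfrak{a}$ from \eqref{eq:associated_form} restricted to $V=H^1_0(\Omega)$, with reference space $H=L^2(\Omega)$ and $j:V\hookrightarrow H$ the natural inclusion. Continuity $|\mathfrak{a}(u,v)|\leqslant M\|u\|_V\|v\|_V$ is immediate from $a,b,c,d\in L^\infty$. For coercivity, the principal part gives ${\rm Re}\,\int_\Omega a\nabla u\cdot\overline{\nabla u}\,dx\geqslant\kappa\|\nabla u\|_{L^2}^2$, while the first-order cross terms are absorbed by Young's inequality,
\[
\Big|\int_\Omega (b\cdot\nabla u)\overline{u}\,dx+\int_\Omega u(c\cdot\overline{\nabla u})\,dx\Big|\leqslant \tfrac{\kappa}{2}\|\nabla u\|_{L^2}^2+C\|u\|_{L^2}^2,
\]
with $C$ depending only on $\|b\|_\infty$, $\|c\|_\infty$, and $\kappa$. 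After further absorbing the zero-order contribution into $\|u\|_{L^2}^2$, one obtains an $\omega\in\mathbb{R}$ such that ${\rm Re}\,\mathfrak{a}(u,u)+\omega\|u\|_{L^2}^2\geqslant\tfrac{\kappa}{2}\|u\|_V^2$; that is, $\mathfrak{a}|_{V\times V}$ is $V$-elliptic after a shift.

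Next I would identify the form-associated operator with $A_\text{D}$. By \eqref{eq:associated_operator}, this operator sends $u\in H^1_0(\Omega)$ to $f\in L^2(\Omega)$ precisely when $\mathfrak{a}(u,v)=(f|v)_H$ for all $v\in V$. Testing against $v\in C_c^\infty(\Omega)$ recovers condition \eqref{eq:condicao-operador_maximal} defining $\mathscr{A}u=f$, and the converse direction follows by density of $C_c^\infty(\Omega)$ in $H^1_0(\Omega)$. Standard form theory (see e.g.\ \cite{ouhabaz2005analysis}, \cite{arendt2012sectorial}) then immediately yields that $-A_\text{D}$ generates a quasi-contractive, holomorphic $C_0$-semigroup on $L^2(\Omega)$, settling the first assertion.

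For the cosine function claim over $\mathbb{K}=\mathbb{C}$, I would decompose $\mathfrak{a}=\mathfrak{a}_0+\mathfrak{b}$, where $\mathfrak{a}_0(u,v)=\int_\Omega a\nabla u\cdot\overline{\nabla v}\,dx$ is Hermitian (by Hypothesis \ref{hypothesis}) and $V$-elliptic, so that its associated operator $A_0$---the Dirichlet realization of $-{\rm div}(a\nabla\,\cdot\,)$---is self-adjoint and bounded below on $L^2(\Omega)$; hence $-A_0$ generates a cosine function. The remainder $\mathfrak{b}$ collects the first- and zero-order contributions and obeys the bound $|\mathfrak{b}(u,v)|\leqslant C(\|u\|_V\|v\|_H+\|u\|_H\|v\|_V)$, which is precisely the hypothesis under which cosine-function generation is preserved under form perturbation, and transfers the property to $-A_\text{D}$. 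Since every cosine-function generator automatically generates a holomorphic semigroup of angle $\pi/2$, the full statement follows. The routine parts here are the coercivity inequality and the domain identification; I expect the main subtlety to be locating and applying the correct perturbation statement for cosine functions in the abstract form language, since the first-order terms are only form-bounded rather than bounded perturbations of $A_0$.
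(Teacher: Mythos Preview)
Your argument for the first assertion---ellipticity of $\mathfrak{a}|_{H^1_0\times H^1_0}$ via Young's inequality, identification of the associated operator with $A_\text{D}$ by testing against $C_\text{c}^\infty(\Omega)$ and density, and the resulting quasi-contractive $C_0$-semigroup---is exactly what the paper does.

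For the cosine-function claim the paper takes a shorter route than your perturbation scheme: it simply estimates
\[
|{\rm Im}\,\mathfrak{a}(u)|\leqslant(\|b-\overline{c}\|_{L^\infty(\Omega)^N}+\|{\rm Im}\,d\|_{L^\infty(\Omega)})\|u\|_{H^1(\Omega)}\|u\|_{L^2(\Omega)}
\]
and invokes Proposition~\ref{proposition:Mugnolo-Nittka-JEE2012-numerical} (the Mugnolo--Nittka/Crouzeix numerical-range criterion already stated in the paper). Your decomposition $\mathfrak{a}=\mathfrak{a}_0+\mathfrak{b}$ is correct and in fact delivers this same estimate immediately, since $\mathfrak{a}_0$ is Hermitian and hence ${\rm Im}\,\mathfrak{a}(u)={\rm Im}\,\mathfrak{b}(u)$, which your bound on $\mathfrak{b}$ controls by $2C\|u\|_V\|u\|_H$. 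So rather than hunting for an abstract form-perturbation theorem for cosine functions---which, as you rightly flag, is the delicate point, because the $c$-term is not an $A_0^{1/2}$-bounded operator perturbation in the usual sense without further regularity on $c$---you can bypass that step entirely and feed your estimate directly into Proposition~\ref{proposition:Mugnolo-Nittka-JEE2012-numerical}. The paper's approach buys you a self-contained argument using only what is already on the page; your decomposition is a natural way to \emph{discover} the imaginary-part bound but is not needed as a separate structural ingredient.
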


Before we go into the proof we quote the following result which has been noted in \cite{mugnolo2012convergence}. We write 
$\mathfrak{a}(u)$ for $\mathfrak{a}(u,u)$ throughout this paper.

\begin{proposition}\label{proposition:Mugnolo-Nittka-JEE2012-numerical}
Let $V$ and $H$ be Hilbert spaces and let $j:V\to H$ be a bounded linear operator with dense range. Let $\mathfrak{a}:V\times V\to \mathbb{C}$ 
be a $j$-elliptic form with associated operator $A$ on $H$. If there exists an $M\geqslant 0$ such that 
\begin{equation}\label{eq:estimate-imaginary-part-form}
|{\rm Im}\, \frak{a}(u)|\leqslant M\|u\|_V\|j(u)\|_H\ \ (u\in V)
\end{equation}
then $-A$ generates a cosine operator function and hence a holomorphic semigroup of angle $\pi/2$.
\end{proposition}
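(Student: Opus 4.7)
The plan is to apply the classical form method to the form $\mathfrak{a}$ from \eqref{eq:associated_form} restricted to $V:=H_0^1(\Omega)$, with $H:=L^2(\Omega)$ and $j:V\to H$ the canonical inclusion (which has dense range). Since $j$ is injective, the full machinery of Proposition \ref{prop:associated_operator-V-kerj-Vj} is not needed. Continuity of $\mathfrak{a}$ is immediate from $a,b,c,d\in L^\infty$ and Cauchy-Schwarz. For $j$-ellipticity, use Hypothesis \ref{hypothesis} and Cauchy-Schwarz to obtain
\[
{\rm Re}\,\mathfrak{a}(u)\geqslant \kappa\|\nabla u\|_{L^2}^2-\bigl(\|b\|_\infty+\|c\|_\infty\bigr)\|\nabla u\|_{L^2}\|u\|_{L^2}-\|d\|_\infty\|u\|_{L^2}^2,
\]
then absorb the cross term into $\tfrac{\kappa}{2}\|\nabla u\|_{L^2}^2$ by Young's inequality. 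This gives ${\rm Re}\,\mathfrak{a}(u)+\omega\|j(u)\|_H^2\geqslant \tfrac{\kappa}{2}\|u\|_{H^1}^2$ for some $\omega\in \mathbb{R}$. Note that neither integration by parts nor the divergence-free hypotheses on $b,c$ are invoked at this stage.

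Once $j$-ellipticity is established, the classical form method yields an m-sectorial operator $B$ on $H$ associated with $\mathfrak{a}$ in the sense of \eqref{eq:associated_operator}. I would then identify $B$ with $A_\text{D}$: if $u\in \mathscr{D}(B)$ with $Bu=f$, then $u\in H_0^1(\Omega)$ and $\mathfrak{a}(u,v)=(f|v)_{L^2}$ for all $v\in H_0^1(\Omega)$; specialising $v\in C_\text{c}^\infty(\Omega)$ gives, via \eqref{eq:condicao-operador_maximal}, that $\mathscr{A}u=f\in L^2(\Omega)$. Conversely, for $u\in \mathscr{D}(A_\text{D})$ the identity $\mathfrak{a}(u,v)=(\mathscr{A}u|v)_{L^2}$ holds for every $v\in C_\text{c}^\infty(\Omega)$ by \eqref{eq:condicao-operador_maximal} and extends to all $v\in H_0^1(\Omega)$ by density. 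Thus $B=A_\text{D}$, and m-sectoriality of $-B-\omega$ forces $-A_\text{D}$ to generate a quasi-contractive $C_0$-semigroup (which is in fact holomorphic of some angle).

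For the second statement, I would apply Proposition \ref{proposition:Mugnolo-Nittka-JEE2012-numerical}. Since $a$ is Hermitian, the principal term $\int_\Omega a\nabla u\cdot \overline{\nabla u}\,dx$ is real and contributes nothing to ${\rm Im}\,\mathfrak{a}(u)$. The remaining three terms are estimated directly by Cauchy-Schwarz:
\[
|{\rm Im}\,\mathfrak{a}(u)|\leqslant \bigl(\|b\|_\infty+\|c\|_\infty\bigr)\|\nabla u\|_{L^2}\|u\|_{L^2}+\|d\|_\infty\|u\|_{L^2}^2\leqslant M\|u\|_V\|j(u)\|_H
\]
with $M:=\|b\|_\infty+\|c\|_\infty+\|d\|_\infty$, which is exactly \eqref{eq:estimate-imaginary-part-form}. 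Proposition \ref{proposition:Mugnolo-Nittka-JEE2012-numerical} then delivers the cosine operator function and the angle $\pi/2$ holomorphy.

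There is no single hard step here; the main point of care is making sure the classical form method (with injective $j$) applies and that the identification of the associated operator with $A_\text{D}$ is done cleanly, i.e.\ that the test space $C_\text{c}^\infty(\Omega)$ used to define $\mathscr{A}u\in L^2(\Omega)$ in \eqref{eq:condicao-operador_maximal} is dense in $H_0^1(\Omega)$, which is precisely the definition of $H_0^1(\Omega)$.
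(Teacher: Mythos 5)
Your proposal does not address the statement it was supposed to prove. The statement in question is the abstract generation result (Proposition \ref{proposition:Mugnolo-Nittka-JEE2012-numerical}): for an arbitrary $j$-elliptic form $\mathfrak{a}$ on Hilbert spaces $V$, $H$ with associated operator $A$, the bound \eqref{eq:estimate-imaginary-part-form} forces $-A$ to generate a cosine operator function and hence a holomorphic semigroup of angle $\pi/2$. What you wrote is instead a proof of Proposition \ref{prop:operator-delta-plus-nabla-H10}, the concrete statement about $A_\text{D}$: you set up the form \eqref{eq:associated_form} on $H^1_0(\Omega)$, check ellipticity, identify the associated operator with $A_\text{D}$, verify the imaginary-part estimate for this particular form, and then, in your own words, ``apply Proposition \ref{proposition:Mugnolo-Nittka-JEE2012-numerical}''. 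As a proof of the target statement this is circular: the result to be proved is invoked as a black box, and nothing in your argument explains why the estimate on ${\rm Im}\,\mathfrak{a}$ yields a cosine operator function for a general $j$-elliptic form. So there is a genuine gap --- in fact the entire proof of the statement is missing (your computations are fine, but they belong to the proof of Proposition \ref{prop:operator-delta-plus-nabla-H10}, where the paper indeed uses them).

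For comparison, the paper's argument for the statement is short but has real content: if $x\in \mathscr{D}(A)$ with $\|x\|_H=1$, then by \eqref{eq:associated_operator} there is $u\in V$ with $x=j(u)$ and $(Ax|x)_H=\mathfrak{a}(u)$; $j$-ellipticity gives $\alpha\|u\|_V^2\leqslant {\rm Re}\,(Ax|x)_H+\omega$, while \eqref{eq:estimate-imaginary-part-form} gives $|{\rm Im}\,(Ax|x)_H|^2\leqslant M^2\|u\|_V^2$, so the numerical range of $A$ lies in the parabola $\{z\in\mathbb{C}:({\rm Im}\,z)^2\leqslant \tfrac{M^2}{\alpha}({\rm Re}\,z+\omega)\}$. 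One then appeals to Crouzeix's theorem, which says that an m-sectorial operator whose numerical range lies in such a parabola generates a cosine operator function, and to the known fact that every cosine-function generator generates a holomorphic semigroup of angle $\pi/2$; this is exactly the route of \cite[Proposition 2.4]{mugnolo2012convergence}, which the paper cites. A correct write-up of the statement should contain this numerical-range computation and the references to Crouzeix's theorem (or reproduce its proof), none of which appears in your proposal.
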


In fact, estimate \eqref{eq:estimate-imaginary-part-form} implies that the numerical range of $A$ lies in a parabola with vertex on the real axis 
and opened in the direction of the positive real axis. Thus the assertion that $-A$ generates a cosine operator function follows from a theorem due 
to M. Crouzeix. Moreover, it is known that every generator of a cosine operator function also generates a holomorphic semigroup 
of angle $\pi/2$; for more details and references, see \cite[Proposition 2.4]{mugnolo2012convergence}. 

\begin{proof}[Proof of Proposition \ref{prop:operator-delta-plus-nabla-H10}]
Let $\frak{a}:H_0^1(\Omega)\times H_0^1(\Omega)\to \mathbb{K}$ be the sesquilinear form defined by the same formula as in Eq. \eqref{eq:associated_form}, 
but restricted to $H_0^1(\Omega)$. Recall that, under Hypothesis \ref{hypothesis}, $b,c\in L^\infty(\Omega)^N$. We first estimate
\begin{equation}\label{eq:estimate-re-intb-gradu-u-2}
\begin{aligned}
{\rm Re}\, \int_\Omega ((b+\overline{c})\cdot\nabla u)\overline{u}\, dx&\geqslant 
-\|b+\overline{c}\|_{L^\infty(\Omega)^N}\|\nabla u\|_{L^2(\Omega)^N}\|u\|_{L^{2}(\Omega)}\\
&\geqslant -\|b+\overline{c}\|_{L^\infty(\Omega)^N}(\varepsilon\|\nabla u\|_{L^2(\Omega)^N}^2+c_\varepsilon\|u\|_{L^{2}(\Omega)}^2).
\end{aligned} 
\end{equation}
If we choose $\varepsilon>0$ such that $\|b+\overline{c}\|_{L^\infty(\Omega)^N}\varepsilon =\frac{\kappa}{2}$ then we get the estimate
\begin{equation}
{\rm Re}\, \frak{a}(u)+(\|b+\overline{c}\|_{L^\infty(\Omega)^N}c_\varepsilon +\|({\rm Re}\, d)^-\|_{L^\infty(\Omega)}) \|u\|_{L^2(\Omega)}^2 
\geqslant \frac{\kappa}{2}\int_\Omega |\nabla u|^2\, dx .
\end{equation}
From this, with $\omega_1=\|b+\overline{c}\|_{L^\infty(\Omega)^N}c_\varepsilon +\|({\rm Re}\, d)^-\|_{L^\infty(\Omega)} +\frac{\kappa}{2}$, we obtain
\begin{equation}\label{eq:estimate-form-elliptic-H10}
{\rm Re}\, \frak{a}(u)+\omega_1 \|u\|_{L^2(\Omega)}^2 \geqslant \frac{\kappa}{2}\|u\|_{H^1(\Omega)}^2\ \ \ (u\in H^1_0(\Omega)). 
\end{equation}
Thus, $\frak{a}$ is $L^2(\Omega)$-elliptic. It is elementary to check that $A_\text{D}$ is the operator associated 
with $\frak{a}$; thus the assertion that it generates a quasi-contrative $C_0$-semigroup follows from the general 
theory, cf. e.g \cite[Theorem 5.7]{arendt-voigt:18ISEM}.

Moreover, 
\begin{align}
|{\rm Im}\, \frak{a}(u)|&=
\Big|{\rm Im}\, \Big(\int_\Omega (b\cdot\nabla u)\overline{u}\, dx+\int_\Omega u (c\cdot \overline{\nabla u})\, dx+\int_\Omega d|u|^2\, dx\Big)\Big|\\
&\leqslant (\|b-\overline{c}\|_{L^\infty(\Omega)^N}+\|{\rm Im}\, d\|_{L^\infty(\Omega)})\|u\|_{H^1(\Omega)}\|u\|_{L^2(\Omega)},
\end{align}
thus the last assertion follows from Proposition \ref{proposition:Mugnolo-Nittka-JEE2012-numerical}.
\end{proof}

\begin{remark}\label{remark:basic_estimate}
Note that in the derivation of estimate \eqref{eq:estimate-form-elliptic-H10} no special property of $H_0^1(\Omega)$ is used, so that it is 
still valid for elements $u\in H^1(\Omega)$ whenever the form $\mathfrak{a}$ defined by Eq. \eqref{eq:associated_form} is considered on 
$H^1(\Omega)$. We use this in the following without further ado.  
\end{remark}

Let $\Omega\subset \mathbb{R}^N$ be a bounded open set with Lipschitz boundary. In this case elements in $H^1(\Omega)$ have a well-defined trace 
on the boundary. We say an element $u\in H^1(\Omega)$ with $\mathscr{A}u\in L^2(\Omega)$ has a weak conormal derivative if there 
exists $h\in L^2(\partial\Omega)$ such that
\begin{equation}\label{eq:derivada_conormal_fraca}
-\int_{\Omega}(\mathscr{A}u)\overline{v}\, dx+\frak{a}(u,v)=\int_{\partial\Omega}h \overline{v}\, d\sigma
\end{equation}
holds for every $v\in H^1(\Omega)$. In this case we put $\partial_\nu u:=h$; this definition is natural in the sense that it reduces to the 
classical notion of conormal derivative (smooth case) and also to the definition of weak normal derivative introduced in \cite{arendt2012sectorial} 
(see also \cite{arendt2011dirichlet} and \cite{arendt2014dirichlet}). By repeating the same proof above we can define a realization of 
$\mathscr{A}$ with Neumann boundary conditions, namely, an operator $A_\text{N}$ on $L^2(\Omega)$ given by
\begin{align*}
\mathscr{D}(A_\text{N}) & :=\{u\in H^1(\Omega):\mathscr{A}u \in L^2(\Omega), \ 
\text{`$\partial_\nu u=0$'}\},\\
A_\text{N}u & :=\mathscr{A}u\ \ \ (u\in \mathscr{D}(A_\text{N})),
\end{align*}
where `$\partial_\nu u=0$' means `$\frak{a}(u,v)=\int_{\Omega}(\mathscr{A}u)\overline{v}\, dx$ for all $v\in H^1(\Omega)$', or, equivalently, that the 
weak conormal derivative exists and equals zero. The operator $A_\text{N}$, however, will not be relevant in this paper.

Next we consider a realization of $\mathscr{A}$ with the Robin boundary condition $\partial_{\nu}u+\beta u|_{\partial\Omega}=0$, 
where $\beta\in L^\infty(\partial\Omega)$. Let $\mathfrak{a}^{\beta}:H^1(\Omega)\times H^1(\Omega)\to \mathbb{K}$ be the 
sesquilinear form defined by
\begin{equation}\label{eq:forma-Robin-beta}
\mathfrak{a}^{\beta}(u,v)=\mathfrak{a}(u,v)+\int_{\partial\Omega} \beta u\overline{v}\, d\sigma . 
\end{equation}
Since, under our present assumptions on $\Omega$, the trace is compact from $H^1(\Omega)$ to $L^2(\partial\Omega)$, it follows from 
Lions's lemma \cite[Ch. 2, Lemma 6.1]{necas2012direct} that there exists $c_1\geqslant 0$ such that
\begin{align*}
\|({\rm Re}\, \beta)^-\|_{L^{\infty}(\partial\Omega)}\int_{\partial\Omega}|u|^2\, d\sigma&\leqslant \frac{\kappa}{4}\|u\|_{H^1(\Omega)}^2+c_1\int_\Omega |u|^2\, dx ,
\end{align*}
thus from estimate \eqref{eq:estimate-form-elliptic-H10} (cf. Remark \ref{remark:basic_estimate}) we get
\begin{equation}
 {\rm Re}\, \frak{a}^\beta(u)+ (\omega_1 +c_1) \|u\|_{L^2(\Omega)}^2 \geqslant \frac{\kappa}{4}\|u\|_{H^1(\Omega)}^2\ \ \ (u\in H^1(\Omega)).
\end{equation}
Therefore, $\mathfrak{a}^\beta$ is $L^2(\Omega)$-elliptic. Moreover, by a standard trace inequality the form $\mathfrak{a}^\beta$ 
clearly satisfies an estimate of the form \eqref{eq:estimate-imaginary-part-form}. We have thus proved the following.

\begin{proposition}\label{prop:operator-A-Robin_condition}
Let $\Omega\subset \mathbb{R}^N$ be a bounded open set with Lipschitz boundary, let $\beta\in L^\infty(\partial\Omega)$ and assume 
Hypothesis \ref{hypothesis}. Let $A_\beta$ be the operator on $L^2(\Omega)$ defined by
\begin{align*}
\mathscr{D}(A_\beta) & :=\{u\in H^1(\Omega): \mathscr{A}u \in L^2(\Omega),\ 
\partial_{\nu}u+\beta u|_{\partial\Omega}=0 \},\\
A_\beta u & :=\mathscr{A}u\ \ \ (u\in \mathscr{D}(A_\beta)).
\end{align*}
Then $-A_\beta$ is the generator of a quasi-contrative $C_0$-semigroup. If $\mathbb{K}=\mathbb{C}$ then $-A_\beta$ generates a cosine operator function 
and hence a holomorphic semigroup of angle $\pi/2$.
\end{proposition}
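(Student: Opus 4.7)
The plan is to repeat, on the larger form domain $H^1(\Omega)$, the strategy used for $A_\mathrm{D}$ in Proposition \ref{prop:operator-delta-plus-nabla-H10}. The bulk form $\mathfrak{a}$ already satisfies the estimate \eqref{eq:estimate-form-elliptic-H10} on all of $H^1(\Omega)$ by Remark \ref{remark:basic_estimate}, so the task reduces to controlling the perturbation $\int_{\partial\Omega}\beta u\overline{v}\,d\sigma$. The real part of the diagonal boundary term is bounded below by $-\|(\mathrm{Re}\,\beta)^-\|_{L^\infty(\partial\Omega)}\|u|_{\partial\Omega}\|_{L^2(\partial\Omega)}^2$. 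Since $\Omega$ is Lipschitz, the trace $H^1(\Omega)\to L^2(\partial\Omega)$ is compact, so Lions's lemma produces, for every $\varepsilon>0$, a constant $c_\varepsilon$ with $\|u|_{\partial\Omega}\|_{L^2(\partial\Omega)}^2\leqslant \varepsilon\|u\|_{H^1(\Omega)}^2+c_\varepsilon\|u\|_{L^2(\Omega)}^2$. Choosing $\varepsilon$ small enough to absorb a quarter of the gradient contribution yields
$$
\mathrm{Re}\,\mathfrak{a}^\beta(u)+\omega_2\|u\|_{L^2(\Omega)}^2\geqslant \frac{\kappa}{4}\|u\|_{H^1(\Omega)}^2 \qquad (u\in H^1(\Omega))
$$
for a suitable $\omega_2\geqslant 0$, i.e.\ $L^2(\Omega)$-ellipticity of $\mathfrak{a}^\beta$ with respect to the dense inclusion $j:H^1(\Omega)\hookrightarrow L^2(\Omega)$.

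Next I would verify that $A_\beta$ coincides with the operator associated to $\mathfrak{a}^\beta$ via \eqref{eq:associated_operator}. Given $u\in H^1(\Omega)$ and $f\in L^2(\Omega)$ with $\mathfrak{a}^\beta(u,v)=(f|v)_{L^2(\Omega)}$ for all $v\in H^1(\Omega)$, testing first on $C_{\mathrm c}^\infty(\Omega)$ forces $\mathscr{A}u=f$ by \eqref{eq:condicao-operador_maximal}; substituting this back, the remaining identity
$$
\mathfrak{a}(u,v)-\int_\Omega(\mathscr{A}u)\overline{v}\,dx=-\int_{\partial\Omega}\beta u\,\overline{v}\,d\sigma \qquad (v\in H^1(\Omega))
$$
matches the definition \eqref{eq:derivada_conormal_fraca} of the weak conormal derivative and precisely encodes the Robin condition $\partial_\nu u+\beta u|_{\partial\Omega}=0$. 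The converse implication is the same computation run in reverse. The assertion that $-A_\beta$ generates a quasi-contractive $C_0$-semigroup then follows from the Lions form generation theorem invoked in the proof of Proposition \ref{prop:operator-delta-plus-nabla-H10}.

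For the final assertion in the complex case, I would verify the numerical-range estimate \eqref{eq:estimate-imaginary-part-form} and apply Proposition \ref{proposition:Mugnolo-Nittka-JEE2012-numerical}. The bulk contribution to $|\mathrm{Im}\,\mathfrak{a}^\beta(u)|$ is already controlled by $(\|b-\overline{c}\|_{L^\infty(\Omega)^N}+\|\mathrm{Im}\,d\|_{L^\infty(\Omega)})\|u\|_{H^1(\Omega)}\|u\|_{L^2(\Omega)}$ as in Proposition \ref{prop:operator-delta-plus-nabla-H10}. For the remaining boundary piece, the standard trace interpolation inequality $\|u|_{\partial\Omega}\|_{L^2(\partial\Omega)}^2\leqslant C\|u\|_{H^1(\Omega)}\|u\|_{L^2(\Omega)}$ (from continuity of the trace into $H^{1/2}(\partial\Omega)$ combined with $L^2$–$H^1$ interpolation) gives a bound of exactly the required shape, so \eqref{eq:estimate-imaginary-part-form} holds for $\mathfrak{a}^\beta$.

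The only genuine obstacle is the sign-indefiniteness of $\beta$: the boundary term cannot be absorbed by a direct positivity bound, and the argument really hinges on invoking the compactness of the trace through Lions's lemma. Once that step is in place, the remaining verifications parallel the Dirichlet case line by line.
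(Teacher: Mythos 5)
Your proposal is correct and follows essentially the same route as the paper: the bulk estimate of Remark \ref{remark:basic_estimate} on $H^1(\Omega)$, absorption of the boundary term via compactness of the trace and Lions's lemma to get $L^2(\Omega)$-ellipticity of $\mathfrak{a}^\beta$, and the standard trace interpolation inequality to verify \eqref{eq:estimate-imaginary-part-form} before applying Proposition \ref{proposition:Mugnolo-Nittka-JEE2012-numerical}. The only difference is that you spell out the identification of $A_\beta$ with the operator associated to $\mathfrak{a}^\beta$, which the paper leaves implicit.
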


Let $(\Omega,\mathscr{A},\mu)$ be a measure space and $1\leqslant p<\infty$. A semigroup $T$ on $L^p(\Omega ,\mu)$ is positive if 
$T(t)f\geqslant 0$ whenever $f\geqslant 0$ and $t>0$. A semigroup $T$ on $L^p(\Omega ,\mu)$ is sub-Markovian if, besides being positive, it is also 
$L^\infty$-contractive, meaning that $\|T(t)f\|_{\infty}\leqslant \|f\|_\infty$ for all $f\in L^p\cap L^\infty(\Omega)$. This property 
is crucial in connection with the problem of extrapolating the semigroup to the $L^p$ scale which is the starting point for further investigations of 
their spectral properties. In order to establish these properties, which are equivalent to the invariance of certain convex and closed sets, we 
employ the following $j$-elliptic version of Ouhabaz's invariance theorem, proved in \cite[Proposition 2.9]{arendt2012sectorial}. Actually, 
the version stated and proved in \cite[Proposition 2.9]{arendt2012sectorial} assumes in addition that the form is accretive and the 
consequence of this is that item (c) below is possible with $\omega=0$. This may be interesting if one is concerned with item (c) as a necessary condition 
but here we are interested in the implication `(c)$\Rightarrow$(a)' so that the version stated below, which can be proven by adapting the 
proof in \cite[Theorem 9.20]{arendt-voigt:18ISEM}, is more convenient. Moreover, $P:H\to C$ is the minimizing 
projection, see e.g. \cite[Theorem 5.2]{brezis2011functional}.

\begin{proposition}\label{prop:Ouhabaz-invariance_criteria}
Let $V$ and $H$ be Hilbert spaces and let $j:V\to H$ be a bounded linear operator with dense range. Let 
$\mathfrak{a}:V\times V\rightarrow \mathbb{K}$ be a $j$-elliptic and continuous sesquilinear form with associated operator $A$. Let $T$ 
be the semigroup generated by $-A$. Let $C\subset H$ be a non-empty closed convex set with minimizing projection $P:H\rightarrow C$. Then 
the following assertions are equivalent. 
\begin{enumerate}
\item $C$ is invariant under $T$.
\item For all $u\in V$ there exists $w\in V$ such that  
\begin{equation}\label{eq:condicao-invariancia-forma-versao_geral}
 Pj(u)=j(w)\ \ \text{and}\ \ {\rm Re}\, \mathfrak{a}(w,u-w)\geqslant 0.
\end{equation}
\item For all $u\in V$ there exists $w\in V$ such that  
\begin{equation*}
 Pj(u)=j(w)\ \ \text{and}\ \ {\rm Re}\, \mathfrak{a}(u,u-w)\geqslant -\omega\|j(u)-j(w)\|_H^2
\end{equation*}
for some $\omega\in \mathbb{R}$ depending only on the form $\mathfrak{a}$.
\end{enumerate}
\end{proposition}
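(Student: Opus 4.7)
My plan is to prove the cycle (b) $\Rightarrow$ (c) $\Rightarrow$ (a) $\Rightarrow$ (b). The first implication is purely algebraic: splitting
$$
{\rm Re}\, \mathfrak{a}(u,u-w)={\rm Re}\, \mathfrak{a}(u-w,u-w)+{\rm Re}\, \mathfrak{a}(w,u-w),
$$
bounding the first summand from below by $\alpha\|u-w\|_V^2-\omega\|j(u)-j(w)\|_H^2\geqslant -\omega\|j(u)-j(w)\|_H^2$ via $j$-ellipticity, and the second summand by $0$ via (b), yields (c) with the $\omega$ taken directly from the $j$-ellipticity of $\mathfrak{a}$.

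The central implication (c) $\Rightarrow$ (a) I would handle through the resolvent. It is standard that $C$ is invariant under $T$ if and only if it is invariant under $\mu(\mu+A)^{-1}$ for all $\mu$ large enough. Fix $h\in C$ and $\mu>\omega$, and let $u\in V$ be the unique solution of
$$
\mathfrak{a}(u,v)+\mu(j(u)|j(v))_H=\mu(h|j(v))_H \ \ (v\in V),
$$
so that $j(u)=\mu(\mu+A)^{-1}h$; the goal is to show $j(u)\in C$. Apply (c) to this $u$ to obtain $w\in V$ with $j(w)=Pj(u)$ and ${\rm Re}\, \mathfrak{a}(u,u-w)\geqslant -\omega\|j(u)-j(w)\|_H^2$. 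Testing the resolvent equation against $v=u-w$ and taking real parts yields
$$
{\rm Re}\, \mathfrak{a}(u,u-w)=-\mu\,{\rm Re}\,(j(u)-h\,|\,j(u)-j(w))_H.
$$
The projection characterization ${\rm Re}\,(j(u)-j(w)\,|\,h-j(w))_H\leqslant 0$, valid because $h\in C$, rearranges to ${\rm Re}\,(j(u)-h\,|\,j(u)-j(w))_H\geqslant \|j(u)-j(w)\|_H^2$, and feeding this back forces $(\mu-\omega)\|j(u)-j(w)\|_H^2\leqslant 0$. For $\mu>\omega$ one concludes $j(u)=Pj(u)\in C$, as required.

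For the reverse direction (a) $\Rightarrow$ (b), one runs the same resolvent machinery backwards. Given $u\in V$ and large $\mu$, let $w_\mu\in V$ solve
$$
\mathfrak{a}(w_\mu,v)+\mu(j(w_\mu)|j(v))_H=\mu(Pj(u)|j(v))_H\ \ (v\in V),
$$
so that $j(w_\mu)=\mu(\mu+A)^{-1}Pj(u)\in C$ by invariance. Testing against $v=u-w_\mu$ and applying the projection characterization with $y=j(w_\mu)\in C$ yields, after a short computation, the dual estimate ${\rm Re}\, \mathfrak{a}(w_\mu,u-w_\mu)\geqslant \mu\|Pj(u)-j(w_\mu)\|_H^2\geqslant 0$. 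The main obstacle, and the step that prevents a direct passage to the limit, is that in the genuine $j$-elliptic setting (where $j$ may be non-injective and $V$ is not embedded in $H$) the family $(w_\mu)$ is not automatically bounded in $V$: the resolvent identity only yields $\alpha\|w_\mu\|_V^2\lesssim \mu\|Pj(u)-j(w_\mu)\|_H\cdot\|j(w_\mu)\|_H+\omega\|j(w_\mu)\|_H^2$, and the first factor need not be of order $O(1)$ unless $Pj(u)\in \mathscr{D}(A)$. Overcoming this, as in the adaptation of \cite[Theorem 9.20]{arendt-voigt:18ISEM} referenced in the excerpt, requires either exploiting the decomposition $V=\mathscr{N}(j)\oplus V_1$ with $j|_{V_1}$ continuous and injective onto $j(V)$, or alternatively smoothing $Pj(u)$ by an auxiliary resolvent and running a diagonal argument, in order to extract a weakly convergent subsequence whose limit $w\in V$ satisfies $j(w)=Pj(u)$ together with ${\rm Re}\, \mathfrak{a}(w,u-w)\geqslant 0$.
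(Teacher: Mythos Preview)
The paper does not supply its own proof of this proposition: it is quoted as a known result, with the accretive case attributed to \cite[Proposition~2.9]{arendt2012sectorial} and the general version stated to follow ``by adapting the proof in \cite[Theorem~9.20]{arendt-voigt:18ISEM}''. So there is no in-paper argument to compare against, and your write-up is in fact more detailed than what the paper offers.

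On the substance: your implications (b)~$\Rightarrow$~(c) and (c)~$\Rightarrow$~(a) are correct and are exactly the resolvent argument one finds in the references. Since (c)~$\Rightarrow$~(a) is the only direction the paper ever invokes, your proof already covers everything the paper needs.

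For (a)~$\Rightarrow$~(b), the gap you flag is genuine: testing the resolvent equation against $v=w_\mu$ gives
\[
\alpha\|w_\mu\|_V^2+(\mu-\omega)\|j(w_\mu)\|_H^2\leqslant \mu\,\|Pj(u)\|_H\,\|j(w_\mu)\|_H,
\]
which bounds $\|j(w_\mu)\|_H$ uniformly but does not bound $\|w_\mu\|_V$ unless one knows $\|Pj(u)-j(w_\mu)\|_H=O(\mu^{-1})$, which is precisely the $\mathscr{D}(A)$ obstruction you mention. Of your two suggested remedies, the decomposition route is the one actually used in \cite{arendt2012sectorial}: one passes to the restricted form $\mathfrak{a}|_{V_j(\mathfrak{a})\times V_j(\mathfrak{a})}$, on which $j$ is injective and the form is genuinely elliptic (this is the content of the orthogonal-complement construction there), and then the classical embedded Ouhabaz criterion applies directly. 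Your diagonal/smoothing alternative would also work but is heavier than necessary. In short, your proposal is correct where it is complete, honest where it is not, and aligned with the cited literature.
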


The celebrated Krein-Rutman theorem asserts that if the generator $A$ 
of a positive semigroup has compact resolvent and $s(A)>-\infty$ ($s(A)$ is the spectral bound of $A$), then $-s(A)$ is the first 
eigenvalue $\lambda_1(-A)$ of $-A$ and admits a positive eigenfunction. We refer the interested 
reader to \cite[Lecture 10]{arendt:9ISEM} for more information. 

Let $(\Omega,\mathscr{A},\mu)$ be a measure space and $1\leqslant p<\infty$. By definition, a semigroup $T$ on $L^p(\Omega ,\mu)$ is called 
irreducible if the only closed ideals $\mathscr{I}\subset L^p(\Omega ,\mu)$ (they are necessarily of the form $L^p(\omega)$, for some 
$\omega\in \mathscr{A}$) which are invariant by $T$ (in the sense that $T\mathscr{I}\subset \mathscr{I}$) are $\mathscr{I}=\{0\}$ and 
$\mathscr{I}=L^p(\Omega ,\mu)$ itself. The following well known result is often used to deduce irreducibility for a large class of 
elliptic operators. See Remark \ref{remark:irreducibility}.

\begin{proposition}\label{proposition:irreducibility}
Let $\Omega\subset \mathbb{R}^N$ be an open connected set. Let $T$ be a strongly continuous semigroup on $L^2(\Omega)$ associated to an elliptic form 
$\mathfrak{a}:V\times V\to \mathbb{K}$, where $V\subset H^1(\Omega)$ is a subspace containing $C_\text{c}^\infty(\Omega)$. Then $T$ is irreducible.
\end{proposition}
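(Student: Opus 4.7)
The plan is to exploit the standard characterization of closed ideals in $L^2(\Omega)$ together with the $j$-elliptic invariance criterion of Proposition \ref{prop:Ouhabaz-invariance_criteria}, applied to the (injective) continuous inclusion $j:V\hookrightarrow L^2(\Omega)$. Every closed ideal $\mathscr{I}\subset L^2(\Omega)$ has the form $\mathscr{I}=\{f\in L^2(\Omega):f=0 \text{ a.e.\ on } \Omega\setminus\omega\}$ for some measurable $\omega\subset\Omega$, and the orthogonal projection $P$ of $L^2(\Omega)$ onto $\mathscr{I}$ is simply $Pf=\mathbf{1}_\omega f$. Thus proving irreducibility reduces to showing that every $T$-invariant ideal corresponds to $\omega$ that is essentially $\emptyset$ or $\Omega$.

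Assume $\mathscr{I}$ is invariant under $T$. The implication (a)$\Rightarrow$(b) of Proposition \ref{prop:Ouhabaz-invariance_criteria}, with $C=\mathscr{I}$, produces for each $u\in V$ some $w\in V$ satisfying $j(w)=Pj(u)=\mathbf{1}_\omega u$. Since $j$ is the inclusion, this says exactly that $\mathbf{1}_\omega u\in V$. Specializing to $u=\varphi\in C_\text{c}^\infty(\Omega)\subset V$ yields $\mathbf{1}_\omega \varphi\in V\subset H^1(\Omega)$ for every test function $\varphi$; choosing $\varphi\equiv 1$ on a given relatively compact open subset $\Omega'\subset\Omega$ then gives $\mathbf{1}_\omega\in H^1(\Omega')$. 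Hence $\mathbf{1}_\omega\in H^1_{\text{loc}}(\Omega)$.

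To conclude, I would invoke the standard Sobolev-space fact that the distributional gradient of $u\in H^1_{\text{loc}}(\Omega)$ vanishes almost everywhere on each level set $\{u=c\}$. Applied to $u=\mathbf{1}_\omega$ with $c\in\{0,1\}$, this yields $\nabla\mathbf{1}_\omega=0$ a.e.\ on $\Omega$, so $\mathbf{1}_\omega$ is constant on each connected component of $\Omega$. Because $\Omega$ is connected, $\mathbf{1}_\omega$ is a.e.\ constant, forcing $|\omega|=0$ or $|\Omega\setminus\omega|=0$. Equivalently, $\mathscr{I}\in\{\{0\},L^2(\Omega)\}$, which is precisely the irreducibility of $T$.

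The main conceptual step is the invocation of the $j$-elliptic invariance criterion, which translates the semigroup-theoretic hypothesis of ideal invariance into the concrete Sobolev statement $\mathbf{1}_\omega V\subset V$; the identification of $P$ and the injectivity of $j$ are essential so that the abstract $w$ furnished by the criterion coincides with the desired function $\mathbf{1}_\omega u$. The remaining ingredients, namely the classification of closed ideals of $L^2(\Omega)$ and the vanishing of Sobolev gradients on level sets, are standard and present no obstacle.
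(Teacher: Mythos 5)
Your argument is correct and follows essentially the same route as the paper: the invariance criterion (with $j$ the inclusion and $P$ multiplication by $\mathbf{1}_\omega$) reduces invariance of the ideal to $\mathbf{1}_\omega V\subset V$, after which connectedness forces $\omega$ or its complement to be null. The only difference is that you write out the ``hard part'' (that $\mathbf{1}_\omega\in H^1_{\mathrm{loc}}(\Omega)$ has vanishing gradient on its level sets, hence is a.e.\ constant), whereas the paper simply cites Ouhabaz's book for this step; your spelled-out version is the standard argument given there.
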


\begin{proof}
Suppose $T$ leaves $L^2(\omega)$ invariant for some measurable $\omega\subset \Omega$. By Ouhabaz invariance theorem, 
$\chi_\omega u\in V$ whenever $u\in V$. As in the proof of \cite[Theorem 4.5]{ouhabaz2005analysis} 
(cf. also \cite[Proposition 11.1.2]{arendt:9ISEM}), it follows from connectedness that either $\omega$ has measure zero or 
$\Omega\backslash\omega$ has measure zero; this is the hard part of the proof but it is well known (see the references just given). Thus, 
either $L^2(\omega)=\{0\}$ or $L^2(\omega)=L^2(\Omega)$.
\end{proof}

\begin{remark}\label{remark:irreducibility}
The statement and proof of Proposition \ref{proposition:irreducibility} may be surprising to some readers, due to its simplicity. It is 
convenient to say some words about this. Experts know very well that related results on irreducibility as stated in Ouhabaz's book 
include hypotheses on positivity. To understand why, it is important to observe that in \cite[Definition 2.8]{ouhabaz2005analysis} 
irreducibility is defined as follows. A holomorphic semigroup $T$ (in particular, a semigroup associated with an elliptic form) 
on $L^2(\Omega,\mu)$ is irreducible if and only if
\begin{equation}\label{eq:irreducibility}
T(t)f>0 \ \text{a.e. on}\ \Omega \ (t>0)\ \text{whenever}\ 0\neq f \in L^2(\Omega,\mu)_+.
\end{equation}
This defining property is easily seen to imply the invariance property we have used to define irreducibility and already implies that, in 
particular, an irreducible semigroup is positive. On the other hand, for positive semigroups, both concepts of irreducibility are equivalent; 
this is the content of \cite[Theorem 2.9]{ouhabaz2005analysis}. To summarize, our definition of 
irreducibility here dispenses with hypotheses on positivity because these hypotheses are usually required only to go from the 
invariance property we have used to define irreducibility to the property expressed in Eq. \eqref{eq:irreducibility}. Moreover, locality of the 
form is also not needed to arrive at the conclusion in the hard part of the proof above (although the forms to which we apply the result are local); this 
can in part be explained since the form domain is very special, namely, a subspace of $H^1(\Omega)$ and $\Omega$ is connected. 
\end{remark}

Finally, we will also need some monotonicity properties of the semigroups $e^{-tA_\beta}$ when different $\beta$'s are considered. Let 
$\Omega\subset \mathbb{R}^N$ be a bounded open set with Lipschitz boundary. Suppose that the form $\frak{a}$ in Eq. \eqref{eq:associated_form} has 
real coefficients. By \cite[Theorem 4.2]{ouhabaz2005analysis} the semigroup generated by $-A_\beta$ is positive. Moreover, it follows 
from well-known comparison results, see e.g. \cite[Theorem 2.24]{ouhabaz2005analysis}, that if $\beta_0,\beta_1\in L^\infty(\partial\Omega)$ 
and $0<\beta_0\leqslant \beta_1$ then $e^{-tA_{\beta_1}}\leqslant e^{-tA_{\beta_0}}$ for all $t\geqslant 0$.  If $\Omega$ is also connected then 
the semigroup generated by $-A_\beta$ is irreducible, by Proposition \ref{proposition:irreducibility}. In this case, the Krein-Rutman theorem 
(more precisely, the monotonicity result \cite[Theorem 10.2.10]{arendt:9ISEM}) allows us to summarize this discussion in the following.

\begin{proposition}\label{proposition:first_eigenvalue-characterize-operators}
Let $\Omega\subset \mathbb{R}^N$ be a bounded open connected set with Lipschitz boundary. Suppose the form 
$\frak{a}$ in Eq. \eqref{eq:associated_form} has real coefficients and let $\beta_0,\beta_1\in L^\infty(\partial\Omega)$ with 
$0<\beta_0\leqslant \beta_1$. Then 
\begin{equation}\label{eq:first_eingenvalue-iff-operators}
\lambda_1(A_{\beta_0})=\lambda_1(A_{\beta_1})\ \text{if, and only if}\ A_{\beta_0}=A_{\beta_1}.
\end{equation} 
\end{proposition}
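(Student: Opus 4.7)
The direction $A_{\beta_0}=A_{\beta_1}\Rightarrow \lambda_1(A_{\beta_0})=\lambda_1(A_{\beta_1})$ is immediate. The plan for the non-trivial direction is to extract $A_{\beta_0}=A_{\beta_1}$ from equality of first eigenvalues via the strict Krein–Rutman monotonicity principle alluded to in the statement of the proposition.

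First I would collect the relevant structural facts on $A_{\beta_0}$ and $A_{\beta_1}$. By Proposition \ref{prop:operator-A-Robin_condition} each $-A_{\beta_i}$ generates a quasi-contractive $C_0$-semigroup on $L^2(\Omega)$. Since $\Omega$ is bounded with Lipschitz boundary, the form domain $H^1(\Omega)$ embeds compactly into $L^2(\Omega)$ by Rellich--Kondrachov, so $A_{\beta_i}$ has compact resolvent. The coefficients of $\mathfrak{a}^{\beta_i}$ being real, $e^{-tA_{\beta_i}}$ is positive by \cite[Theorem~4.2]{ouhabaz2005analysis}, and connectedness of $\Omega$ together with Proposition \ref{proposition:irreducibility} gives irreducibility. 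Finally, the hypothesis $0<\beta_0\leqslant\beta_1$ combined with \cite[Theorem~2.24]{ouhabaz2005analysis} yields the domination
\begin{equation*}
0\leqslant e^{-tA_{\beta_1}}\leqslant e^{-tA_{\beta_0}}\quad (t\geqslant 0).
\end{equation*}

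Next I would invoke Krein--Rutman: each $\lambda_1(A_{\beta_i})$ is an eigenvalue of $A_{\beta_i}$ with strictly positive eigenfunction, and the cited monotonicity result \cite[Theorem~10.2.10]{arendt:9ISEM} for positive irreducible semigroups with compact resolvent asserts that the above domination forces $\lambda_1(A_{\beta_0})\leqslant \lambda_1(A_{\beta_1})$ with equality \emph{only if} the two semigroups actually coincide. Hence $\lambda_1(A_{\beta_0})=\lambda_1(A_{\beta_1})$ implies $e^{-tA_{\beta_0}}=e^{-tA_{\beta_1}}$ for every $t\geqslant 0$, and differentiation at $t=0$ gives $A_{\beta_0}=A_{\beta_1}$.

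The only genuinely delicate step is the strict monotonicity assertion; everything else is a matter of collecting previously established facts about $A_\beta$. If one prefers to avoid quoting strict monotonicity at the level of semigroups, an equivalent route is to translate everything to the resolvents $R_\mu^i:=(\mu+A_{\beta_i})^{-1}$ for $\mu$ large enough: these are positive, compact, and irreducible operators on $L^2(\Omega)$ with $0\leqslant R_\mu^1\leqslant R_\mu^0$, so the classical Krein–Rutman strict monotonicity for the spectral radius of positive irreducible compact operators gives $r(R_\mu^0)=r(R_\mu^1)\Rightarrow R_\mu^0=R_\mu^1$, which is the same conclusion.
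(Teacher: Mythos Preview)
Your proof is correct and follows essentially the same route as the paper. The paper's argument is contained in the discussion preceding the proposition: positivity via \cite[Theorem~4.2]{ouhabaz2005analysis}, domination $e^{-tA_{\beta_1}}\leqslant e^{-tA_{\beta_0}}$ via \cite[Theorem~2.24]{ouhabaz2005analysis}, irreducibility via Proposition~\ref{proposition:irreducibility}, and then the strict monotonicity result \cite[Theorem~10.2.10]{arendt:9ISEM}---exactly the ingredients you assemble, with your resolvent reformulation being a harmless alternative packaging of the last step.
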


\section{The Dirichlet-to-Neumann operator $D_\lambda^\mathscr{A}$}\label{sec:DtN-operator}

Now we turn to the definition of the Dirichlet-to-Neumann operator with respect to Problem \eqref{eq:eigenvalue-problem-DtN-perturb}. Under 
suitable assumptions on $\Omega$ and $\mathscr{A}$ (see Proposition \ref{prop:operator-DtN} below) we define 
the Dirichlet-to-Neumann operator $D_\lambda^\mathscr{A}$ as the operator on $L^2(\partial\Omega)$ such that 
$\varphi\in \mathscr{D}(D_\lambda^\mathscr{A})$ and $D_\lambda^\mathscr{A}\varphi =h$ if, and only if there is a weak solution $u\in H^1(\Omega)$  of 
Problem \eqref{eq:eigenvalue-problem-DtN-perturb} with $\partial_\nu u=h$ (weak conormal derivative). As we have anounced in the introduction 
(see paragraph before Eq. \eqref{eq:form-dtn-perturb}), 
let us see that $D_\lambda^\mathscr{A}$ is the operator associated to the form $\mathfrak{a}_\lambda$ defined in Eq. \eqref{eq:form-dtn-perturb} when 
$j:H^1(\Omega)\to L^2(\partial\Omega)$ is the trace operator. The form $\mathfrak{a}_\lambda$ is not $j$-elliptic in general 
and the fact that it admits a well-defined asociated operator with good properties can be established in a reasonably easy way either 
by appealing to the theory of compactly elliptic forms (which we recall at the end of this section) or by using the following result. 

For a bounded sequilinear form $\mathfrak{a}:V\times V\to \mathbb{K}$, let $V_j(\mathfrak{a})$ be the closed subspace 
\begin{equation}\label{eq:Vj}
V_j(\mathfrak{a}):=\{u\in V:\mathfrak{a}(u,v)=0\ \text{for all}\ v\in \mathscr{N}(j)\}.
\end{equation}

\begin{proposition}[cf. \cite{arendt2012sectorial}, Corollary 2.2]\label{prop:associated_operator-V-kerj-Vj}
Let $V$, $H$ be Hilbert spaces and let $j\in \mathscr{L}(V,H)$ have dense range. Let $\mathfrak{a}:V\times V\to \mathbb{K}$ be a continuous 
sesquilinear form and suppose that
\begin{enumerate}
\item[(i)] $V=V_j(\mathfrak{a})+\mathscr{N}(j)$;
\item[(ii)] there exists $\omega\in \mathbb{R}$ and $\alpha>0$ such that ${\rm Re}\, \mathfrak{a}(u)+\omega\|j(u)\|_H^2\geqslant \alpha\|u\|_V^2$ 
for all $u\in V_j(\mathfrak{a})$.
\end{enumerate}
Then $\mathfrak{a}$ admits an associated operator $A$ in the sense of \eqref{eq:associated_operator} which is m-sectorial.
\end{proposition}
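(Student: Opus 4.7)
The plan is to reduce to the classical ($j$-elliptic) case by restricting everything to the closed subspace $\widetilde{V} := V_j(\mathfrak{a})$. First I would observe that $\widetilde{V}$ is closed in $V$, being the intersection of the kernels of the continuous linear functionals $u \mapsto \mathfrak{a}(u, v)$ for $v \in \mathscr{N}(j)$. Setting $\widetilde{j} := j|_{\widetilde{V}} \in \mathscr{L}(\widetilde{V},H)$ and $\widetilde{\mathfrak{a}} := \mathfrak{a}|_{\widetilde{V}\times \widetilde{V}}$, hypothesis (i) implies $\widetilde{j}(\widetilde{V}) = j(V)$, which is dense in $H$; and hypothesis (ii) is precisely $\widetilde{j}$-ellipticity of $\widetilde{\mathfrak{a}}$ on $\widetilde{V}$.

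At this point I would invoke the classical complete-case form theorem of Arendt--ter Elst to obtain an m-sectorial operator $A$ on $H$ associated with the triple $(\widetilde{V},\widetilde{\mathfrak{a}},\widetilde{j})$. The remaining task is to verify that this same $A$ is associated with $(\mathfrak{a},j)$ in the sense of \eqref{eq:associated_operator}. For the forward direction, let $x \in \mathscr{D}(A)$ with $Ax = f$; the subspace theory furnishes some $w \in \widetilde{V}$ with $j(w) = x$ and $\widetilde{\mathfrak{a}}(w, v') = (f|j(v'))_H$ for all $v' \in \widetilde{V}$. For arbitrary $v \in V$, use (i) to write $v = v_1 + v_2$ with $v_1 \in \widetilde{V}$ and $v_2 \in \mathscr{N}(j)$. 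Since $w \in V_j(\mathfrak{a})$, the term $\mathfrak{a}(w, v_2)$ vanishes; since $j(v_2) = 0$, we have $j(v) = j(v_1)$; hence
\[
\mathfrak{a}(w, v) \;=\; \widetilde{\mathfrak{a}}(w, v_1) \;=\; (f | j(v_1))_H \;=\; (f | j(v))_H.
\]
Conversely, if $u \in V$ satisfies $j(u) = x$ and $\mathfrak{a}(u, v) = (f | j(v))_H$ for all $v \in V$, then testing with $v \in \mathscr{N}(j)$ forces $u \in V_j(\mathfrak{a}) = \widetilde{V}$, and restricting to $v \in \widetilde{V}$ reproduces the defining relation of $A$ on the subspace.

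The one subtle point, and what I anticipate as the main obstacle, is to see that no direct-sum or continuity assumption on the decomposition in (i) is needed. Indeed, uniqueness of the splitting $v = v_1 + v_2$ is not assumed, but the argument above is insensitive to which splitting is chosen: the $\mathfrak{a}$-orthogonality $\mathfrak{a}(w,\mathscr{N}(j)) = 0$ built into the definition of $V_j(\mathfrak{a})$ absorbs exactly this ambiguity. (As a byproduct one also sees $V_j(\mathfrak{a}) \cap \mathscr{N}(j) = \{0\}$: any $z$ in the intersection has $\widetilde{\mathfrak{a}}(z,z) = 0$ and $j(z) = 0$, so (ii) gives $z = 0$.) Single-valuedness and m-sectoriality of $A$ are then inherited directly from the classical theorem applied to $(\widetilde{V},\widetilde{\mathfrak{a}},\widetilde{j})$, completing the proof.
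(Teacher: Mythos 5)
Your proof is correct: the reduction to the closed subspace $V_j(\mathfrak{a})$ with the restricted form and trace, the verification that hypothesis (i) yields dense range of $j|_{V_j(\mathfrak{a})}$ and that the ambiguity in the splitting $v=v_1+v_2$ is absorbed by $\mathfrak{a}(w,\mathscr{N}(j))=0$, all check out. The paper itself gives no proof here, simply citing Arendt--ter Elst (Corollary 2.2), and your argument is essentially the one underlying that citation, so there is nothing to add.
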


The following result describes the Dirichlet-to-Neumann operator $D_\lambda^\mathscr{A}$ along with its basic properties.

\begin{proposition}\label{prop:operator-DtN}
Let $\Omega\subset \mathbb{R}^N$ be a bounded open set with Lipschitz boundary and assume the conditions stated in 
Hypothesis \ref{hypothesis}. Suppose $\lambda\notin \sigma(A_D)$. Then the operator $D_\lambda^\mathscr{A}$ 
on $L^2(\partial\Omega)$ given by 
\begin{align*}
\mathscr{D}(D_\lambda^\mathscr{A})&=\{\varphi\in L^2(\partial\Omega):\text{there exists}\ u\in H^1(\Omega)\ \text{such that}\ \\
&\hspace{5cm}  \mathscr{A}u=\lambda u, \ u|_{\partial\Omega}=\varphi\ \text{and}\ \partial_{\nu}u\in L^2(\partial\Omega)\}\\
D_\lambda^\mathscr{A}\varphi &=\partial_{\nu}u ,
\end{align*}
is quasi-m-accretive and has compact resolvent. If $b=\overline{c}$ and $d$ is real then $D_\lambda^\mathscr{A}$ is self-adjoint. 
If $\mathbb{K}=\mathbb{C}$ then $D_\lambda^\mathscr{A}$ is quasi-m-sectorial. Moreover, $D_\lambda^\mathscr{A}$ is the operator associated 
with the restrictions of $\mathfrak{a}_\lambda$ and $j$ to 
\begin{equation}\label{eq:V-j-a_lambda}
V_j(\mathfrak{a}_\lambda)=\{u\in H^1(\Omega):\mathscr{A}u=\lambda u\ \text{in the distributional sense}\}. 
\end{equation}
\end{proposition}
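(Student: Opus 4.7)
The plan is to apply Proposition~\ref{prop:associated_operator-V-kerj-Vj} to the form $\mathfrak{a}_\lambda$ on $V := H^1(\Omega)$ with $H := L^2(\partial\Omega)$ and $j$ equal to the trace operator, which has dense range (its image contains $H^{1/2}(\partial\Omega)$) and, by the Lipschitz regularity of $\partial\Omega$, is compact. First I would identify $V_j(\mathfrak{a}_\lambda)$: by density of $C_\mathrm{c}^\infty(\Omega)$ in $\mathscr{N}(j) = H_0^1(\Omega)$, the requirement $\mathfrak{a}_\lambda(u,v) = 0$ for all $v \in \mathscr{N}(j)$ is equivalent, via \eqref{eq:condicao-operador_maximal}, to $\mathscr{A}u = \lambda u$ in $\mathscr{D}'(\Omega)$, which gives~\eqref{eq:V-j-a_lambda}.

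For hypothesis~(i), given $u \in H^1(\Omega)$ I would seek $v \in H_0^1(\Omega)$ with $\mathfrak{a}_\lambda(v, \psi) = \mathfrak{a}_\lambda(u, \psi)$ for every $\psi \in H_0^1(\Omega)$; then $u - v$ belongs to $V_j(\mathfrak{a}_\lambda)$ and the decomposition follows. The form $\mathfrak{a}_\lambda$ restricted to $H_0^1 \times H_0^1$ is $L^2$-elliptic after a shift $\omega_1 + \mathrm{Re}\,\lambda$ (Remark~\ref{remark:basic_estimate}); adding a sufficiently large multiple of $(\cdot\,|\,\cdot)_{L^2(\Omega)}$ thus yields an isomorphism $H_0^1(\Omega) \to H_0^1(\Omega)^*$ by Lax--Milgram, and removing the shift is a compact perturbation through the Rellich--Kondrachov embedding $H_0^1 \hookrightarrow L^2$, so $\mathfrak{a}_\lambda|_{H_0^1 \times H_0^1}$ induces a Fredholm operator of index zero. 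Its kernel consists of $v \in \mathscr{D}(A_\mathrm{D})$ with $A_\mathrm{D} v = \lambda v$, which is trivial since $\lambda \notin \sigma(A_\mathrm{D})$; hence the operator is invertible and the desired $v$ exists.

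The hard part is hypothesis~(ii), the $j$-ellipticity on $V_j(\mathfrak{a}_\lambda)$. Combined with the G\aa rding estimate $\mathrm{Re}\,\mathfrak{a}_\lambda(u) + C \|u\|_{L^2(\Omega)}^2 \geqslant \tfrac{\kappa}{2}\|u\|_{H^1(\Omega)}^2$, valid on all of $H^1(\Omega)$ by Remark~\ref{remark:basic_estimate} with $C = \omega_1 + \mathrm{Re}\,\lambda$, it reduces to producing, for each $\varepsilon > 0$, a constant $C_\varepsilon$ so that
$\|u\|_{L^2(\Omega)}^2 \leqslant \varepsilon\|u\|_{H^1(\Omega)}^2 + C_\varepsilon\|j(u)\|_{L^2(\partial\Omega)}^2$
for every $u \in V_j(\mathfrak{a}_\lambda)$. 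I would argue this by contradiction: a failing sequence normalized by $\|u_n\|_{H^1(\Omega)} = 1$ would satisfy $\|u_n\|_{L^2(\Omega)}^2 \geqslant \varepsilon$ and $\|j(u_n)\|_{L^2(\partial\Omega)} \to 0$. Extracting a subsequence, the compact embedding $H^1 \hookrightarrow L^2$ and the compactness of the trace, together with the closedness of $V_j(\mathfrak{a}_\lambda)$ in $H^1(\Omega)$, yield a weak limit $u \in V_j(\mathfrak{a}_\lambda)$ with $\|u\|_{L^2(\Omega)}^2 \geqslant \varepsilon > 0$ and $j(u) = 0$, forcing $u \in \mathscr{D}(A_\mathrm{D})$ and $A_\mathrm{D} u = \lambda u$, in contradiction with $\lambda \notin \sigma(A_\mathrm{D})$.

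Once Proposition~\ref{prop:associated_operator-V-kerj-Vj} applies, the identification with $D_\lambda^\mathscr{A}$ is clean: the defining condition $\mathfrak{a}_\lambda(u, v) = (h\,|\,j(v))_{L^2(\partial\Omega)}$ for all $v \in H^1(\Omega)$ reduces, on testing against $v \in H_0^1(\Omega)$, to $\mathscr{A}u = \lambda u$; what remains is precisely the identity~\eqref{eq:derivada_conormal_fraca} expressing $\partial_\nu u = h$. The remaining assertions follow routinely: the resolvent factors through the compact trace and is therefore compact; when $b = \overline{c}$ and $d$ is real, the form $\mathfrak{a}_\lambda$ is directly seen to be Hermitian (using that $a$ is Hermitian), whence its associated operator is self-adjoint; m-sectoriality in the complex case is part of the conclusion of Proposition~\ref{prop:associated_operator-V-kerj-Vj}, and quasi-m-accretivity follows from it.
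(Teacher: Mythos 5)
Your proposal is correct and follows essentially the same route as the paper: you verify hypotheses (i) and (ii) of Proposition \ref{prop:associated_operator-V-kerj-Vj} for $\mathfrak{a}_\lambda$ with $j$ the trace, identify $V_j(\mathfrak{a}_\lambda)$ as in \eqref{eq:V-j-a_lambda}, and recover $D_\lambda^\mathscr{A}$ from the weak conormal derivative identity \eqref{eq:derivada_conormal_fraca}. The only cosmetic differences are that you obtain the decomposition $H^1(\Omega)=H^1_0(\Omega)+V_j(\mathfrak{a}_\lambda)$ by a Lax--Milgram/Fredholm index-zero argument where the paper uses invertibility of $\lambda-\mathscr{L}$ via $\sigma(\mathscr{L})=\sigma(A_\text{D})$, and you reprove by a compactness--contradiction argument the trace estimate that the paper gets by citing Lions's lemma.
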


\begin{proof}
Let us first show that (i) in Proposition \ref{prop:associated_operator-V-kerj-Vj} is satisfied. To this end, we follow the same reasoning 
used in \cite[Lemma 2.2]{arendt2012friedlander} which concerns 
the case $a=I$, $b=c=0$ and $d=0$. With our notation, we have to prove that $V=H_0^1(\Omega)\oplus V_j(\mathfrak{a}_\lambda)$ 
if $\lambda\notin \sigma(A_\text{D})$. Define $\mathscr{L}:H_0^1(\Omega)\to H^{-1}(\Omega)$ by 
$\langle\mathscr{L}u,v\rangle =\mathfrak{a}(u,v)$. Under the usual identifications in the Gel'fand triple 
$H_0^1(\Omega)\hookrightarrow L^2(\Omega)\hookrightarrow H^{-1}(\Omega)$ we see that 
$A_\text{D}$ is the part of $\mathscr{L}$ on $L^2(\Omega)$ so that $\sigma(\mathscr{L})=\sigma(A_\text{D})$ by 
\cite[Proposition 3.10.3]{Arendtetall:2011}. Thus $\lambda -\mathscr{L}$ is invertible. Fix $u\in H^1(\Omega)$ and define $\Phi\in H^{-1}(\Omega)$ 
by $\Phi(v)=\mathfrak{a}_\lambda(u,v)$. There exists $u_0\in H_0^1(\Omega)$ such that $\lambda u_0 -\mathscr{L}u_0 =-\Phi$, that is,
\[
\lambda \int_\Omega u_0\overline{v}\, dx-\mathfrak{a}(u_0,v)=-\mathfrak{a}_\lambda(u,v)\ \ \ (v\in H^1_0(\Omega)),
\]
thus $\mathfrak{a}_\lambda(u-u_0,v)=0$ for all $v\in H^1_0(\Omega)$. Therefore $u-u_0\in V_j(\mathfrak{a}_\lambda)$ and it follows 
that $H^1(\Omega)=H_0^1(\Omega)+V_j(\mathfrak{a}_\lambda)$. Since $\lambda\notin \sigma(A_\text{D})$ we also 
have $H_0^1(\Omega)\cap V_j(\mathfrak{a}_\lambda)=\{0\}$. 

Now let us prove the ellipticity of $\mathfrak{a}_\lambda|_{V_j(\mathfrak{a}_\lambda)}$, that is, condition (ii) in Proposition 
\ref{prop:associated_operator-V-kerj-Vj}. Lions's lemma applied to the (compact) immersion $V_j(\mathfrak{a}_\lambda)\hookrightarrow L^2(\Omega)$ and 
to the (injective) restriction of the trace $j:V_j(\mathfrak{a}_\lambda)\rightarrow L^2(\partial\Omega)$ gives, for each $\delta>0$, a constant 
$c_\delta\geqslant 0$ such that 
\begin{equation}\label{eq:estimate-Lions-L2-H1-trace}
\int_\Omega|u|^2\, dx\leqslant \delta \|u\|_{H^1(\Omega)}^2+c_\delta\int_{\partial\Omega}|u|^2\, d\sigma , 
\end{equation}
for all $u\in V_j(\mathfrak{a}_\lambda)$, so that 
$$
\int_\Omega|u|^2\, dx\leqslant 
\frac{\delta}{1-\delta}\int_\Omega|\nabla u|^2\, dx +\frac{c_\delta}{1-\delta}\int_{\partial\Omega}|u|^2\, d\sigma \ \ \ (u\in V_j(\mathfrak{a}_\lambda)).
$$
From estimate \eqref{eq:estimate-form-elliptic-H10} (see Remark \ref{remark:basic_estimate}) we have 
\[
{\rm Re}\, \frak{a}_\lambda(u)+(|\lambda|+\omega_1) \|u\|_{L^2(\Omega)}^2 \geqslant \frac{\kappa}{2}\|u\|_{H^1(\Omega)}^2\ \ \ (u\in H^1(\Omega)). 
\]
Combining the above two estimates we obtain
\[
{\rm Re}\, \frak{a}_\lambda(u)+\frac{(|\lambda|+\omega_1)\delta}{1-\delta} \int_\Omega|\nabla u|^2\, dx 
+\frac{(|\lambda|+\omega_1)c_\delta}{1-\delta}\int_{\partial\Omega}|u|^2\, d\sigma
\geqslant \frac{\kappa}{2}\|u\|_{H^1(\Omega)}^2\ \ \ (u\in V_j(\mathfrak{a}_\lambda)). 
\]
By choosing $\delta>0$ satisfying $\frac{(|\lambda|+\omega_1)\delta}{1-\delta}=\frac{\kappa}{4}$ and then $\omega$ to be the correponding number 
$\frac{(|\lambda|+\omega_1)c_\delta}{1-\delta}$ we arrive at the estimate
\begin{equation}\label{eq:ellipticity-a-lambda}
{\rm Re}\, \frak{a}_\lambda(u)+\omega\|u|_{\partial\Omega}\|^2_{L^2(\partial\Omega)}
\geqslant \frac{\kappa}{4}\|u\|_{H^1(\Omega)}^2\ \ \ (u\in V_j(\mathfrak{a}_\lambda)).  
\end{equation}
This is precisely the ellipticity of $\mathfrak{a}_\lambda|_{V_j(\mathfrak{a}_\lambda)}$ with respect to the trace. 

The description of $D_\lambda^\mathscr{A}$ follows from standard arguments, based on the definition 
of weak conormal derivative in Eq. \eqref{eq:derivada_conormal_fraca}; see e.g. \cite[Lecture 8]{arendt-voigt:18ISEM}.
\end{proof}

\begin{proposition}\label{prop:spectrum-Robin-DtN}
Under the same assumptions as in Proposition \ref{prop:operator-DtN}, let $\lambda\notin \sigma(A_\text{D})$ 
and $\beta\in \mathbb{K}$.
\begin{enumerate}
\item $\mathscr{N}(\beta +D_\lambda^\mathscr{A})$ and $\mathscr{N}(\lambda -A_\beta)$ are isomorphic.
\item $-\beta\in \sigma (D_\lambda^\mathscr{A})$ if, and only if $\lambda\in \sigma(A_\beta)$.
\end{enumerate}
\end{proposition}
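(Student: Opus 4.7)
The plan is to construct an explicit isomorphism $\Phi\colon\mathscr{N}(\lambda-A_\beta)\to\mathscr{N}(\beta+D_\lambda^\mathscr{A})$ given by restriction to the boundary, $\Phi(u)=u|_{\partial\Omega}$, and then deduce (b) from (a) via compactness of the resolvents.

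First I would check that $\Phi$ is well-defined. If $u\in\mathscr{N}(\lambda-A_\beta)$ then $u\in H^1(\Omega)$ satisfies $\mathscr{A}u=\lambda u$ and $\partial_\nu u+\beta\, u|_{\partial\Omega}=0$ in the weak conormal sense of \eqref{eq:derivada_conormal_fraca}. In particular the weak conormal derivative exists and equals $-\beta\, u|_{\partial\Omega}\in L^2(\partial\Omega)$. Reading off the definition of $D_\lambda^{\mathscr{A}}$ in Proposition \ref{prop:operator-DtN}, we see $u|_{\partial\Omega}\in \mathscr{D}(D_\lambda^\mathscr{A})$ with $D_\lambda^\mathscr{A}(u|_{\partial\Omega})=-\beta\, u|_{\partial\Omega}$, i.e., $u|_{\partial\Omega}\in\mathscr{N}(\beta+D_\lambda^\mathscr{A})$.

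Next I would establish surjectivity of $\Phi$. Given $\varphi\in\mathscr{N}(\beta+D_\lambda^\mathscr{A})$, the definition of $D_\lambda^\mathscr{A}$ yields $u\in H^1(\Omega)$ with $\mathscr{A}u=\lambda u$, $u|_{\partial\Omega}=\varphi$, and $\partial_\nu u=D_\lambda^\mathscr{A}\varphi=-\beta\varphi=-\beta\, u|_{\partial\Omega}$, so the Robin condition holds and $u\in\mathscr{N}(\lambda-A_\beta)$ with $\Phi(u)=\varphi$. For injectivity, suppose $\Phi(u)=0$. Then $u\in H_0^1(\Omega)$ and $\mathscr{A}u=\lambda u$ in the distributional sense, so on the one hand $u\in V_j(\mathfrak{a}_\lambda)$ by \eqref{eq:V-j-a_lambda}, and on the other hand $u\in H_0^1(\Omega)=\mathscr{N}(j)$. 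The direct sum decomposition $H^1(\Omega)=H_0^1(\Omega)\oplus V_j(\mathfrak{a}_\lambda)$ established in the proof of Proposition \ref{prop:operator-DtN} (which is precisely where the hypothesis $\lambda\notin\sigma(A_\text{D})$ enters) forces $u=0$. This is the only step with real content; the rest is reading off definitions.

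For (b), both operators have compact resolvent: $D_\lambda^\mathscr{A}$ by Proposition \ref{prop:operator-DtN}, and $A_\beta$ because the form domain $H^1(\Omega)$ embeds compactly into $L^2(\Omega)$ on a bounded Lipschitz $\Omega$, so that $A_\beta$—being associated with an $L^2$-elliptic form in view of Proposition \ref{prop:operator-A-Robin_condition}—has compact resolvent. Hence both spectra consist entirely of eigenvalues. Then (a) gives the equivalence
\[
-\beta\in\sigma(D_\lambda^\mathscr{A})\ \Longleftrightarrow\ \mathscr{N}(\beta+D_\lambda^\mathscr{A})\neq\{0\}\ \Longleftrightarrow\ \mathscr{N}(\lambda-A_\beta)\neq\{0\}\ \Longleftrightarrow\ \lambda\in\sigma(A_\beta),
\]
completing the proof. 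The main (and essentially only) subtlety is keeping track of the weak conormal derivative in both directions; once that is lined up with \eqref{eq:derivada_conormal_fraca}, everything else is either a restatement or a direct use of results already proved.
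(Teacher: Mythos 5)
Your argument is correct and follows essentially the same route as the paper: the trace map $u\mapsto u|_{\partial\Omega}$ as an isomorphism $\mathscr{N}(\lambda-A_\beta)\to\mathscr{N}(\beta+D_\lambda^\mathscr{A})$, with $\lambda\notin\sigma(A_\text{D})$ entering only for injectivity, and part (b) deduced from compactness of the resolvents. The only cosmetic difference is that you read off the operator-level definitions (weak conormal derivative, Robin condition) where the paper phrases the same equivalences through the form identities $\mathfrak{a}^\beta(u,v)=\lambda(u|v)_{L^2(\Omega)}$ and $\mathfrak{a}_\lambda(u,v)=(-\beta u|v)_{L^2(\partial\Omega)}$.
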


\begin{proof}
(a). We prove that $\mathscr{N}(\lambda -A_\beta)\ni u\mapsto u|_{\partial\Omega}\in \mathscr{N}(\beta +D_\lambda^\mathscr{A})$ is an 
isomorphism. 

First, we check this operator is well-defined. Let $\mathfrak{a}^\beta$ be the form defined by 
Eq. \eqref{eq:forma-Robin-beta}. Thus, $u\in \mathscr{N}(\lambda -A_\beta)$ if, and only if $\mathfrak{a}^\beta(u,v)=\lambda(u|v)_{L^2(\Omega)}$ 
for all $v\in H^1(\Omega)$, or 
\begin{equation}\label{eq1:dem:teorema:espectro-Robin-DtN}
\mathfrak{a}(u,v)+\beta \int_{\partial\Omega} u\overline{v}\, d\sigma=\lambda\int_\Omega u\overline{v}\, dx\ \ \ (v\in H^1(\Omega)).
\end{equation}
Equivalently, $\mathfrak{a}_\lambda(u,v)=(-\beta u|v)_{L^2(\partial\Omega)}$ for all $v\in H^1(\Omega)$, 
which means that $u|_{\partial\Omega}\in \mathscr{D}(D_\lambda^\mathscr{A})$ and 
$D_\lambda^\mathscr{A} u|_{\partial\Omega}=-\beta u|_{\partial\Omega}$, 
that is, $u|_{\partial\Omega}\in \mathscr{N}(\beta +D_\lambda^\mathscr{A})$.

Now, we prove the above operator is surjective. If $\varphi\in \mathscr{N}(\beta +D_\lambda^\mathscr{A})$ then 
$D_\lambda^\mathscr{A}\varphi=-\beta\varphi$ and this means that, for some $u\in H^1(\Omega)$, 
$u|_{\partial\Omega}=\varphi$ and $\mathfrak{a}_\lambda(u,v)=(-\beta \varphi|v)_{L^2(\partial\Omega)}$ for all $v\in H^1(\Omega)$. This is 
clearly equivalent to Eq. \eqref{eq1:dem:teorema:espectro-Robin-DtN}; thus, $\varphi=u|_{\partial\Omega}$ for some 
$u\in \mathscr{N}(\lambda -A_\beta)$.

Finally, we show injectivity. If $u\in \mathscr{N}(\lambda -A_\beta)$ and $u|_{\partial\Omega} =0$ then, by 
\eqref{eq1:dem:teorema:espectro-Robin-DtN}, $\mathfrak{a}_\lambda(u,v)=0$ for all $v\in H^1(\Omega)$; in particular, 
$\mathscr{A}u=\lambda u$ (in the distributional sense). Thus, $A_\text{D}u=\lambda u$ and, since $\lambda\notin \sigma(A_\text{D})$, 
we conclude that $u=0$. 

(b). Follows from (a) and the fact that $\sigma (D_\lambda^\mathscr{A})$ (resp. $\sigma(A_\beta)$) is a pure point spectrum, since 
$D_\lambda^\mathscr{A}$ (resp. $A_\beta$) has compact resolvent.
\end{proof}

A sequilinear form $\mathfrak{a}:V\times V\to \mathbb{K}$ is said to be \textit{compactly elliptic} if there exists a Hilbert 
space $\widetilde{H}$ and a compact operator $\widetilde{j}:V\to \widetilde{H}$ such that $\mathfrak{a}$ is `$\widetilde{j}$-elliptic' 
in the sense that, for some constant $\widetilde{\alpha}>0$,
$$
{\rm Re}\, \mathfrak{a}(u)+\|\widetilde{j}(u)\|_{\widetilde{H}}^2\geqslant \widetilde{\alpha}\|u\|_V^2\ \ \ (u\in V).
$$
This notion has been introduced in \cite{arendt2014dirichlet}. Now, consider the condition
\begin{equation}\label{eq:condition-form}
\text{if}\ u\in V, \ j(u)=0\ \text{and}\ \mathfrak{a}(u,v)=0\ \text{for all}\ v\in \mathscr{N}(j),\ \text{then}\ u=0.
\end{equation}
If $\mathfrak{a}$ is compactly elliptic and satisfies \eqref{eq:condition-form} then it follows from Lions's lemma that condition (ii) in 
Proposition \ref{prop:associated_operator-V-kerj-Vj} is satisfied. In fact, \eqref{eq:condition-form} implies that the restriction 
$j:V_j(\frak{a})\rightarrow H$ is injective and the operador $\widetilde{j}:V_j(\frak{a})\rightarrow \widetilde{H}$ is compact by 
hypothesis. From Lions's lemma, 
$$\|\widetilde{j}(u)\|^2_{\widetilde{H}}\leqslant \frac{\widetilde{\alpha}}{2}\|u\|_V^2+c\|j(u)\|_H^2\ \ \ (u\in V_j(\frak{a}))$$
for some constant $c\geqslant 0$. The aforementioned condition (ii) follows with $\alpha=\frac{\widetilde{\alpha}}{2}$ by combining the above 
with the compactly ellipticity estimate. Note that this argument is nothing more that an abstract counterpart of the reasoning leading to 
estimate \eqref{eq:estimate-Lions-L2-H1-trace}. It is easy to see that, under condition \eqref{eq:condition-form}, a well-defined operator can be 
associated to $\mathfrak{a}$ (compact ellipticity is not required for this) through \eqref{eq:associated_operator}. The following result tell 
us that much more can be derived from this construction.

\begin{proposition}[cf. \cite{arendt-voigt:18ISEM}, Proposition 8.10 and Theorem 8.11]\label{prop:associated_operator-compactly_elliptic_form}
Let $V$, $H$ be Hilbert spaces and let $j\in \mathscr{L}(V,H)$ have dense range. Let $\mathfrak{a}:V\times V\to \mathbb{K}$ be a continuous 
sesquilinear form. If $\mathfrak{a}$ is compactly elliptic and satisfies \eqref{eq:condition-form} then conditions (i) and (ii) in 
Proposition \ref{prop:associated_operator-V-kerj-Vj} hold. Moreover, the associated operator on $H$ concides with the operator associated to the 
elliptic form obtained from $\mathfrak{a}$ and $j$ by restriction to $V_j(\mathfrak{a})$.
\end{proposition}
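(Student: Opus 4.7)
My plan is to verify hypotheses (i) and (ii) of Proposition \ref{prop:associated_operator-V-kerj-Vj} and then match the operator obtained there with the one associated to the restricted form. Hypothesis (ii) is essentially the argument already sketched in the text preceding the statement: condition \eqref{eq:condition-form} makes $j$ injective on $V_j(\mathfrak{a})$ (if $j(u)=0$ and $u\in V_j(\mathfrak{a})$ then $u\in \mathscr{N}(j)$ with $\mathfrak{a}(u,v)=0$ on $\mathscr{N}(j)$, hence $u=0$), and $\widetilde{j}|_{V_j(\mathfrak{a})}$ remains compact. Lions's lemma then gives, for $\varepsilon=\widetilde{\alpha}/2$, a constant $c_\varepsilon\geqslant 0$ with $\|\widetilde{j}(u)\|_{\widetilde{H}}^2\leqslant \varepsilon\|u\|_V^2+c_\varepsilon\|j(u)\|_H^2$ on $V_j(\mathfrak{a})$; inserting this into the compactly elliptic bound yields ${\rm Re}\, \mathfrak{a}(u)+c_\varepsilon\|j(u)\|_H^2\geqslant (\widetilde{\alpha}/2)\|u\|_V^2$ there, which is (ii).

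The main work is verifying (i). Given $u\in V$, I want $u_0\in \mathscr{N}(j)$ solving $\mathfrak{a}(u_0,v)=\mathfrak{a}(u,v)$ for every $v\in \mathscr{N}(j)$, for then $u-u_0\in V_j(\mathfrak{a})$. Introduce $B,K\in\mathscr{L}(\mathscr{N}(j),\mathscr{N}(j)^*)$ by
\[
\langle Bu_0,v\rangle =\mathfrak{a}(u_0,v),\qquad \langle Ku_0,v\rangle =(\widetilde{j}(u_0)|\widetilde{j}(v))_{\widetilde{H}}.
\]
Compactness of $\widetilde{j}$ restricted to the closed subspace $\mathscr{N}(j)$ makes $K$ compact, while the compactly elliptic inequality yields ${\rm Re}\, \langle (B+K)u_0,u_0\rangle \geqslant \widetilde{\alpha}\|u_0\|_V^2$ on $\mathscr{N}(j)$, so $B+K$ is coercive and Lax--Milgram makes it an isomorphism. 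Hence $B=(B+K)-K$ is a compact perturbation of an isomorphism, i.e.\ Fredholm of index zero; condition \eqref{eq:condition-form} shows $B$ is injective, hence bijective by the Fredholm alternative. Consequently one can solve $Bu_0=\mathfrak{a}(u,\cdot)|_{\mathscr{N}(j)}$, which yields the desired decomposition and thus $V=V_j(\mathfrak{a})+\mathscr{N}(j)$.

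For the identification of the two associated operators, I would unwind the defining relation \eqref{eq:associated_operator} on both sides. If $x=j(u)$ with $\mathfrak{a}(u,v)=(f|j(v))_H$ for every $v\in V$, then testing on $\mathscr{N}(j)$ gives $\mathfrak{a}(u,v)=0$ there; decomposing $u=u_1+u_0$ via (i), the fact that $u_1\in V_j(\mathfrak{a})$ already kills $\mathfrak{a}(u_1,v)$ on $\mathscr{N}(j)$, and \eqref{eq:condition-form} then forces $u_0=0$, i.e.\ $u\in V_j(\mathfrak{a})$. The converse follows by decomposing an arbitrary $v=v_1+v_0\in V$: since $j(v_0)=0$ and $\mathfrak{a}(u,v_0)=0$ for $u\in V_j(\mathfrak{a})$, the identity on $V_j(\mathfrak{a})$ automatically extends to $V$. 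This matches the two candidate operators.

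The step I expect to be the most delicate is organizing the Fredholm argument cleanly---in particular verifying that $K$ is really compact as a map into the antidual of $\mathscr{N}(j)$, and that no additional shift by an $\omega\|j(u_0)\|_H^2$ term sneaks in from the compactly elliptic hypothesis; fortunately the estimate $\operatorname{Re}\mathfrak{a}(u)+\|\widetilde{j}(u)\|_{\widetilde{H}}^2\geqslant\widetilde{\alpha}\|u\|_V^2$ is coercive in the $V$-norm alone, so the coercivity of $B+K$ on $\mathscr{N}(j)$ is direct and no further bookkeeping is needed.
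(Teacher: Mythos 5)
Your argument is correct. Note that the paper does not prove this proposition itself --- it is quoted from \cite{arendt-voigt:18ISEM}, with only condition (ii) sketched in the paragraph preceding the statement --- and your treatment of (ii) (injectivity of $j$ on $V_j(\mathfrak{a})$ from \eqref{eq:condition-form}, compactness of $\widetilde{j}|_{V_j(\mathfrak{a})}$, Lions's lemma with $\varepsilon=\widetilde{\alpha}/2$) coincides with that sketch. Your verification of (i), via Lax--Milgram for $B+K$ on $\mathscr{N}(j)$ and the Fredholm alternative for $B=(B+K)-K$ with injectivity supplied exactly by \eqref{eq:condition-form}, is the standard argument of the cited reference and parallels the concrete decomposition $H^1(\Omega)=H^1_0(\Omega)\oplus V_j(\mathfrak{a}_\lambda)$ established in Proposition \ref{prop:operator-DtN}, where invertibility of $\lambda-\mathscr{L}$ comes from $\lambda\notin\sigma(A_\text{D})$ rather than from Fredholm theory. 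One small simplification in your identification step: from $\mathfrak{a}(u,v)=(f|j(v))_H=0$ for all $v\in\mathscr{N}(j)$ one gets $u\in V_j(\mathfrak{a})$ directly from the definition \eqref{eq:Vj}, so the detour through the decomposition $u=u_1+u_0$ and \eqref{eq:condition-form} is not needed there; the rest of that step (extending the restricted relation to all of $V$ by writing $v=v_1+v_0$) is exactly right.
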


Let us see that $\mathfrak{a}_\lambda$ is compactly elliptic and satisfies \eqref{eq:condition-form} with $V=H^1(\Omega)$ and 
$j$ being the trace from $H^1(\Omega)$ to $L^2(\partial\Omega)$. On the one hand, from the estimate in the 
proof of Proposition \ref{prop:operator-delta-plus-nabla-H10} (see also Remark \ref{remark:basic_estimate}) we can infer that
$$
{\rm Re}\, \frak{a}_\lambda(u)+(|\lambda|+\omega_1) \|u\|_{L^2(\Omega)}^2 \geqslant \frac{\kappa}{2}\|u\|_{H^1(\Omega)}^2\ \ \ (u\in H^1(\Omega)),
$$
which implies that $\mathfrak{a}_\lambda$ is compactly elliptic with $\widetilde{H}=L^2(\Omega)$ and $\widetilde{j}$ being the 
multiplication of the embedding $H^1(\Omega)\hookrightarrow L^2(\Omega)$ by 
$\sqrt{|\lambda| +\omega_1}$. On the other hand, condition \eqref{eq:condition-form} means here that \emph{if $u\in H^1(\Omega)$, 
$u|_{\partial\Omega}=0$, i.e. $u\in H_0^1(\Omega)$, and $\mathfrak{a}_\lambda(u,v)=0$ for all $v\in H^1_0(\Omega)$, then $u=0$}. 
Accordingly, let $u\in H_0^1(\Omega)$ and suppose $\mathfrak{a}_\lambda(u,v)=0$ for all 
$v\in H_0^1(\Omega)$. Thus $u\in H_0^1(\Omega)$ and $\mathfrak{a}(u,v)=\lambda\int_\Omega u\overline{v}\, dx$ for all 
$v\in C_\text{c}^\infty(\Omega)$, which means that $A_\text{D}u=\lambda u$. Therefore, if 
$\lambda\notin \sigma(A_\text{D})$ then $u=0$, that is, $\mathfrak{a}_\lambda$ satisfies \eqref{eq:condition-form}. In view of 
Proposition \ref{prop:associated_operator-compactly_elliptic_form}, the above arguments give another proof of 
Proposition \ref{prop:operator-DtN}.

\section{Proof of Theorem \ref{thm:main}}\label{sec:DtN-semigroup}

In this section we prove Theorem \ref{thm:main}. Let $\Omega\subset \mathbb{R}^N$ be a bounded open set with 
Lipschitz boundary and assume Hypothesis \ref{hypothesis}. Let us define the number
\begin{equation}
\begin{aligned}
\lambda_1^\text{D}(\mathfrak{a})&:=\inf_{u\in H_0^1(\Omega),u\neq 0}\frac{{\rm Re}\, \mathfrak{a}(u)}{\|u\|_{L^2(\Omega)}^2}
\label{eq:definition-lambda1-AD}\\
&=\inf_{u\in H_0^1(\Omega),u\neq 0}
\frac{\int_\Omega a\nabla u\cdot \overline{\nabla u}\, dx 
+{\rm Re}\,\int_\Omega ((b+\overline{c})\cdot\nabla u)\overline{u}\, dx+\int_\Omega ({\rm Re}\, d)|u|^2\, dx}{\int_{\Omega}|u|^2\, dx}. 
\end{aligned} 
\end{equation}
The number $\lambda_1^\text{D}(\mathfrak{a})$ coincides with the first eigenvalue $\lambda_1^\text{D}$ of the Dirichlet Laplacian when 
$\mathscr{A}=-\Delta$ (that is, $a=I$, $b=c=0$ and $d=0$). From the variational characterization of $\lambda_1^\text{D}$ in 
Eq. \eqref{eq:first_eigenvalue-D_Laplacian} it follows from the estimate \eqref{eq:estimate-re-intb-gradu-u-2} in the 
proof of Proposition \ref{prop:operator-delta-plus-nabla-H10} that 
\begin{align}
{\rm Re}\, \mathfrak{a}(u)&\geqslant \kappa\int_\Omega|\nabla u|^2\, dx
-\|b+\overline{c}\|_{L^\infty(\Omega)^N}\|\nabla u\|_{L^2(\Omega)^N} \|u\|_{L^2(\Omega)}-\|({\rm Re}\, d)^-\|_{L^\infty}\int_{\Omega}|u|^2\, dx\\
&\geqslant \kappa\int_\Omega|\nabla u|^2\, dx
-\frac{\|b+\overline{c}\|_{L^\infty(\Omega)^N}}{\sqrt{\lambda_1^\text{D}}}
\|\nabla u\|_{L^2(\Omega)^N}^2-\|({\rm Re}\, d)^-\|_{L^\infty}\int_{\Omega}|u|^2\, dx\\
&\geqslant \Big(\kappa -\frac{\|b+\overline{c}\|_{L^\infty(\Omega)^N}}{\sqrt{\lambda_1^\text{D}}}\Big)\lambda_1^\text{D}\int_\Omega|u|^2\, dx
-\|({\rm Re}\, d)^-\|_{L^\infty(\Omega)}\int_{\Omega}|u|^2\, dx ,
\end{align}
provided $\|b+\overline{c}\|_{L^\infty(\Omega)^N}<\kappa \sqrt{\lambda_1^\text{D}}$; in this case
\begin{equation}\label{eq:estimate-first-eigenvalue-AD-DeltaD}
\lambda_1^\text{D}(\mathfrak{a})\geqslant \kappa\lambda_1^\text{D} -\|b+\overline{c}\|_{L^\infty(\Omega)^N}\sqrt{\lambda_1^\text{D}}
-\|({\rm Re}\, d)^-\|_{L^\infty(\Omega)}. 
\end{equation}
In the following remarks we show how a more explicit estimate can be obtained under additional assumptions on $b$, $c$ and $d$. 

\begin{remark}\label{rmk:int-b-gradu-u-positivo}
The following is well known to the experts and is included here for the
convenience of the reader. Suppose, in addition to the standing assumptions on $\Omega$, $a$ and $d$, 
that $b,c\in C^1(\overline{\Omega})^N$ are real vector fields. 
\begin{enumerate}
\item \label{rmk:item:b-nabla-skew} If $u,v\in H^1(\Omega)$ then 
\begin{align*}
\int_{\Omega}(b\cdot \nabla u)\overline{v}\, dx&=\sum_{j=1}^N\int_\Omega b_j\partial_ju\overline{v}\, dx\\
&=\sum_{j=1}^N\int_\Omega \partial_j(b_ju\overline{v})\, dx-\sum_{j=1}^N\int_\Omega u\partial_j(b_j\overline{v})\, dx\\
&=\int_{\partial\Omega} u\overline{v}b\cdot \nu\, d\sigma-\int_\Omega (\text{div}\, b)u\overline{v}\, dx-\int_{\Omega}u(b\cdot \overline{\nabla v}).
\end{align*}
Therefore, if $\text{div}\, b=0$ and $b\cdot \nu =0$ then $b\cdot \nabla $ is skew-Hermitian in the sense that 
\begin{equation}\label{eq:b-nabla-skew}
\int_\Omega (b\cdot \nabla u)\overline{v}\, dx=- \int_\Omega (b\cdot \overline{\nabla v})u\, dx\ \ \ (u,v\in H^1(\Omega)) .
\end{equation} 
In particular, ${\rm Re}\,\int_\Omega (b\cdot \nabla u)\overline{u}\, dx= 0$ for all $u\in H^1(\Omega)$.
\item \label{rmk:item:estimate-lambda1D} It follows from the computation in \ref{rmk:item:b-nabla-skew} above 
(with $b$ replaced by $b+c$) that
\begin{equation}
\begin{aligned}
\lambda_1^\text{D}(\mathfrak{a})&\geqslant\inf_{u\in H_0^1(\Omega),u\neq 0}
\frac{\int_\Omega a\nabla u\cdot \overline{\nabla u}\, dx +\int_\Omega ({\rm Re}\, d)|u|^2\, dx}{\|u\|_{L^2(\Omega)}^2}\\
&\hspace{1cm}+\frac{1}{2}\inf_{u\in H_0^1(\Omega),u\neq 0}
\frac{\int_{\partial\Omega}|u|^2(b+c)\cdot \nu \, d\sigma-\int_\Omega {\rm div}\, (b+c)|u|^2\, dx}{\|u\|_{L^2(\Omega)}^2}.  
\end{aligned}
\end{equation}
Thus, under the hypotheses in Theorem \ref{thm:main}, 
$\lambda_1^\text{D}(\mathfrak{a})\geqslant \kappa\lambda_1^\text{D}-\|({\rm Re}\, d)^-\|_{L^\infty(\Omega)}$. In particular, 
if ${\rm Re}\, d\geqslant 0$ then $\lambda_1^\text{D}(\mathfrak{a})\geqslant \kappa\lambda_1^\text{D}$.
\end{enumerate}
\end{remark}

The rest of this paper is devoted to the proof of Theorem \ref{thm:main}. For simplicity, we restrict to real scalars, 
so that, in particular, 
\[
\int_\Omega (b\cdot \nabla u)v\, dx=-\int_\Omega (b\cdot \nabla v)u\, dx\ \ \ (u,v\in H^1(\Omega)) .
\]
It may be interesting to carry out all the details concerning the case of complex scalars; we leave this task to the interested reader.
Note that, in view of Remark \ref{rmk:int-b-gradu-u-positivo}\ref{rmk:item:estimate-lambda1D}, the condition 
$4\kappa^{-1}\|b-c\|^2_{L^\infty(\Omega)^N}+\|d^-\|_{L^\infty(\Omega)}+\lambda <\kappa\lambda_1^\text{D}$ stated in 
Theorem \ref{thm:main}\ref{item:semigroup-DtN-positive}\ref{item:semigroup-DtN-submarkovian} 
implies the more `abstract' condition $4\kappa^{-1}\|b-c\|^2_{L^\infty(\Omega)^N}+\lambda <\lambda_1^\text{D}(\mathfrak{a})$, which is 
actually what we use in the proofs below. The same applies to the conditions in 
Theorem \ref{thm:main}\ref{item:semigroup-DtN-irreducible}\ref{item:semigroup-DtN-domination}. 

\begin{proof}[Proof of Theorem \ref{thm:main}\ref{item:semigroup-DtN-positive}]
We apply Ouhabaz invariance criterion to the closed convex set $C:=\{u\in L^2(\Omega;\mathbb{R}):u\geqslant 0\}$, 
whose minimizing projection is $Pu=u^+$.

From the lattice properties of $H^1(\Omega)$ and properties of the trace we know that if $u\in H^1(\Omega)$ then $u^+\in H^1(\Omega)$ and
$(u|_{\partial\Omega})^+=u^+|_{\partial\Omega}$. A similar statement holds for $u^-$. On the other hand, there exists 
$u_0, \widetilde{u_0}\in H_0^1(\Omega)$ and $u_1, u_2\in V_j(\mathfrak{a}_\lambda)$ such that 
$$
u^+=u_0+u_1 \ \ \ \text{and}\ \ \ u^-=\widetilde{u_0}+u_2.
$$
But $u=u^+ -u^-=(u_0-\widetilde{u_0})+(u_1-u_2)$, thus $u_0=\widetilde{u_0}$ if $u\in V_j(\mathfrak{a}_\lambda)$; we can assume that 
$u\in V_j(\mathfrak{a}_\lambda)$ from the start, since our Dirichlet-to-Neumann operator is also associated to the restrictions of the trace 
and the form $\mathfrak{a}_\lambda$ to $V_j(\mathfrak{a}_\lambda)$ (see Proposition \ref{prop:associated_operator-compactly_elliptic_form}). Therefore 
$(u|_{\partial\Omega})^+=u_1|_{\partial\Omega}$ and $(u|_{\partial\Omega})^-=u_2|_{\partial\Omega}$; this means that $Pj(u)=j(u_1)$ 
and it suffices to show that $\mathfrak{a}_\lambda(u,u-u_1)\geqslant -\omega\|j(u-u_1)\|_{L^2(\partial\Omega)}^2$ for some $\omega\in \mathbb{R}$ 
(depending only on $\mathfrak{a}_\lambda$); since $u-u_1=-u_2$, this amounts to show that 
$\mathfrak{a}_\lambda(u,u_2)\leqslant \omega\|j(u_2)\|_{L^2(\partial\Omega)}^2$ or, equivalently,
\begin{equation}
\mathfrak{a}_\lambda(u_1,u_2)\leqslant \mathfrak{a}_\lambda(u_2)+\omega\int_{\partial\Omega}|u_2|^2\, d\sigma .
\end{equation}
From estimate \eqref{eq:ellipticity-a-lambda}, it is enough to show that 
$\mathfrak{a}_\lambda(u_1,u_2)\leqslant \frac{\kappa}{4}\|u_2\|_{H^1(\Omega)}^2$. Let us show that this can always be achieved under 
the hypotheses in Theorem \ref{thm:main}(a).

First, note that 
\begin{equation}
\mathfrak{a}_\lambda(u_1,u_2)=\mathfrak{a}_\lambda(u^+,u^-)-\mathfrak{a}_\lambda(u_0,u_2)-\mathfrak{a}_\lambda(u_1,u_0)
-\mathfrak{a}_\lambda(u_0,u_0).
\end{equation}
Clearly,
\begin{align*}
&\mathfrak{a}_\lambda(u^+,u^-)\\
&=\int_\Omega a\nabla u^+\cdot \nabla u^-\, dx +\int_\Omega (b\cdot\nabla u^+)u^-\, dx+\int_\Omega u^+(c\cdot\nabla u^-)\, dx
+\int_\Omega du^+ u^-\, dx-\lambda \int_\Omega u^+ u^-\, dx\\
&=0.
\end{align*}
Moreover, since $u_1\in V_j(\mathfrak{a}_\lambda)$ and $u_0\in H_0^1(\Omega)$, we have $\mathfrak{a}_\lambda(u_1,u_0)=0$. Besides, 
\begin{align*}
&-\mathfrak{a}_\lambda(u_0,u_2)\\
&=-\int_\Omega a\nabla u_0\cdot \nabla u_2\, dx-\int_\Omega (b\cdot\nabla u_0)u_2\, dx-\int_\Omega u_0(c\cdot\nabla u_2)\, dx
-\int_\Omega du_0 u_2\, dx+\lambda \int_\Omega u_0 u_2\, dx\\
&\stackrel{(1)}{=}-\int_\Omega a\nabla u_2\cdot \nabla u_0\, dx+\int_\Omega (b\cdot\nabla u_2)u_0\, dx+\int_\Omega u_2(c\cdot\nabla u_0)\, dx
-\int_\Omega du_2 u_0\, dx+\lambda \int_\Omega u_2 u_0\, dx\\
&\stackrel{(2)}{=}2\Big(\int_\Omega (b\cdot\nabla u_2)u_0\, dx+\int_\Omega u_2(c\cdot\nabla u_0)\, dx\Big)\\
&=2\int_\Omega ((b-c)\cdot\nabla u_2)u_0\, dx\\
&\leqslant 2\|b-c\|_{L^\infty(\Omega)}\|\nabla u_2\|_{L^2(\Omega)^N}\|u_0\|_{L^2(\Omega)}\\
&\leqslant \frac{\kappa}{4}\|\nabla u_2\|_{L^2(\Omega)^N}^2+\frac{4\|b-c\|_{L^\infty(\Omega)}^2}{\kappa}\|u_0\|_{L^2(\Omega)}^2.
\end{align*}
Above, identity (1) follows from Remark \ref{rmk:int-b-gradu-u-positivo}\ref{rmk:item:b-nabla-skew}, which asserts that 
$$
\int_\Omega (b\cdot\nabla u_0)u_2\, dx=-\int_\Omega (b\cdot\nabla u_2)u_0\, dx
$$
with an analogous identity for $c$. Identity (2) follows from the fact that $\mathfrak{a}_\lambda(u_2,u_0)=0$ 
(since $u_2\in V_j(\mathfrak{a}_\lambda)$ and $u_0\in H_0^1(\Omega)$) and the estimates are the usual H\"older and Young's inequalities, 
respectively. Thus,
\begin{align*}
\mathfrak{a}_\lambda(u_1,u_2)&\leqslant \frac{\kappa}{4}\|\nabla u_2\|_{L^2(\Omega)^N}^2\\
& \hspace{1cm} -\int_\Omega a\nabla u_0\cdot \nabla u_0\, dx-\int_\Omega (b\cdot\nabla u_0)u_0\, dx-\int_\Omega u_0(c\cdot\nabla u_0)\, dx
-\int_\Omega du_0^2\, dx\\
&\hspace{2cm} +\big(\lambda+4\kappa^{-1}\|b-c\|_{L^\infty(\Omega)}^2\big)\int_\Omega |u_0|^2\, dx\\
&\leqslant \frac{\kappa}{4}\|u_2\|_{H^1(\Omega)}^2\\
& \hspace{1cm}-\int_\Omega a\nabla u_0\cdot \nabla u_0\, dx-\int_\Omega (b\cdot\nabla u_0)u_0\, dx-\int_\Omega u_0(c\cdot\nabla u_0)\, dx
-\int_\Omega du_0^2\, dx\\
&\hspace{2cm} +\lambda_1^\text{D}(\mathfrak{a})\int_\Omega |u_0|^2\, dx\\
& \leqslant \frac{\kappa}{4}\|u_2\|_{H^1(\Omega)}^2,
\end{align*}
where the last estimate above follows from the definition of $\lambda_1^\text{D}(\mathfrak{a})$ in \eqref{eq:definition-lambda1-AD}.
\end{proof}

Now, we turn to the proof of Theorem \ref{thm:main}\ref{item:semigroup-DtN-submarkovian}. As it is well known, the sub-Markovian property is 
equivalent to the invariance of the closed convex set $C=\{u\in L^2(\Omega):u\leqslant 1\}$. We then apply Ouhabaz's invariance criterion with 
$Pu=u\wedge 1$.

\begin{proof}[Proof of Theorem \ref{thm:main}\ref{item:semigroup-DtN-submarkovian}]
From the lattice properties of $H^1(\Omega)$ and properties of the trace we know that if $u\in H^1(\Omega)$ then $u\wedge 1\in H^1(\Omega)$ and 
$(u\wedge 1)|_{\partial\Omega}=u|_{\partial\Omega}\wedge 1$. On the other hand, there exists 
$u_0, \widetilde{u_0}\in H_0^1(\Omega)$ and $u_1, u_2\in V_j(\mathfrak{a}_\lambda)$ such that 
$$
u\wedge 1=u_0+u_1 \ \ \ \text{and}\ \ \ -(u-1)^+=\widetilde{u_0}+u_2.
$$
But $u=u\wedge 1+(u-1)^+=(u_0-\widetilde{u_0})+(u_1-u_2)$, thus $u_0=\widetilde{u_0}$ if $u\in V_j(\mathfrak{a}_\lambda)$. Therefore 
$(u|_{\partial\Omega})\wedge 1=u_1|_{\partial\Omega}$ (and $(u|_{\partial\Omega}-1)^+=-u_2|_{\partial\Omega}$), so that $Pj(u)=j(u_1)$,  
$u-u_1=-u_2$ and, as before, we must show that $\mathfrak{a}_\lambda(u_1,u_2)\leqslant \frac{\kappa}{4}\|u_2\|_{H^1(\Omega)}^2$. Using the 
estimates in the proof of (a) above we find that 
\begin{align*}
&\mathfrak{a}_\lambda(u_1,u_2)\\
&=\mathfrak{a}_\lambda(u\wedge 1,-(u-1)^+)-\mathfrak{a}_\lambda(u_0,u_2)-\mathfrak{a}_\lambda(u_1,u_0)-\mathfrak{a}_\lambda(u_0,u_0)\\
&\leqslant -\int_\Omega a\nabla (u\wedge 1)\cdot \nabla (u-1)^+\, dx -\int_\Omega [b\cdot\nabla (u\wedge 1)](u-1)^+\, dx\\
&\hspace{1cm} -\int_\Omega (u\wedge 1)[c\cdot\nabla (u-1)^+]\, dx-\int_\Omega d(u\wedge 1) (u-1)^+\, dx+\lambda \int_\Omega (u\wedge 1) (u-1)^+\, dx\\
& \hspace{2cm} + \frac{\kappa}{4}\int_\Omega|\nabla u_2|^2\, dx\\
& \hspace{3cm} -\int_\Omega a\nabla u_0\cdot \nabla u_0\, dx-\int_\Omega (b\cdot\nabla u_0)u_0\, dx-\int_\Omega u_0(c\cdot\nabla u_0)\, dx
-\int_\Omega du_0^2\, dx\\
&\hspace{4cm} +\Big(\lambda+ \frac{4\|b-c\|_{L^\infty(\Omega)}^2}{\kappa}\Big)\int_\Omega |u_0|^2\, dx\\
& \leqslant -\int_\Omega c\cdot\nabla (u-1)^+\, dx-\int_\Omega d(u-1)^+\, dx+\lambda \int_\Omega (u-1)^+\, dx \\
&\hspace{1cm}+\frac{\kappa}{4}\int_\Omega|\nabla u_2|^2\, dx\\
&\hspace{2cm} -\int_\Omega a\nabla u_0\cdot \nabla u_0\, dx-\int_\Omega ((b+c)\cdot\nabla u_0)u_0\, dx
-\int_\Omega d|u_0|^2\, dx+\lambda_1^\text{D}(\mathfrak{a}) \int_\Omega |u_0|^2\, dx.
\end{align*}
Since ${\rm div}\, c =0$, $\lambda\leqslant d$ and $c\cdot \nu\geqslant 0$ we have
\[\int_\Omega (-c\cdot \nabla \varphi -d\varphi +\lambda\varphi)\, dx 
=\int_\Omega ({\rm div}\, c-d+\lambda) \varphi\, dx -\int_{\partial\Omega}c\cdot \nu \varphi\, d\sigma \leqslant 0\] 
for all $\varphi\in C^1(\overline{\Omega})_+$, which is dense in $H^1(\Omega)_+$. The conclusion follows.
\end{proof}

Next, we prove Theorem \ref{thm:main}\ref{item:semigroup-DtN-irreducible}. We follow the arguments in \cite[Theorem 4.2]{arendt2012friedlander} 
in order to transfer the irreducibility from $A_\beta$ to $D_\lambda^\mathscr{A}$.

\begin{proof}[Proof of Theorem \ref{thm:main}\ref{item:semigroup-DtN-irreducible}]
Let $\Gamma_1\subset \partial\Omega$ be a Borel set with non-zero measure, put $\Gamma_0:=\partial\Omega \backslash \Gamma_1$, and suppose that 
$L^2(\Gamma_1)=\{\varphi\in L^2(\partial\Omega):\varphi|_{\Gamma_0}=0\ \text{a.e.}\}$ is invariant under $e^{-tD_\lambda^\mathscr{A}}$. The restriction 
$e^{-tD_\lambda^\mathscr{A}}|_{L^2(\Gamma_1)}$ is positive and its generator has compact resolvent (since the embedding 
$H^1(\Omega)\hookrightarrow L^2(\Omega)$ is compact). By the Krein-Rutman theorem the first eigenvalue, say 
$-\beta_1$, of its generator admits an eigenfunction $0<\varphi\in L^2(\Gamma_1)$, which turns to be also a positive eigenfunction of 
$D_\lambda^\mathscr{A}$, that is, $D_\lambda^\mathscr{A}\varphi =-\beta_1\varphi$. By definition, there exists $u\in H^1(\Omega)$ such that 
$\mathscr{A}u=\lambda u$, $u|_{\partial\Omega}=\varphi$ and $\partial_\nu u=-\beta_1 \varphi$; thus, 
$$
-\lambda\int uv\, dx+\frak{a}(u,v)=-\beta_1\int_{\partial\Omega}uv\, d\sigma \ \ \ (v\in H^1(\Omega)). 
$$
Since $u|_{\partial\Omega}=\varphi\geqslant 0$ we have $u^-\in H^1_0(\Omega)$. Inserting $v=u^-$ into the above identity, we obtain
\begin{align}
\lambda\int_\Omega |u^-|^2\, dx&=\lambda\int_\Omega uu^-\, dx=\mathfrak{a}(u,u^-)=
\mathfrak{a}(u^-)\geqslant \lambda_1^\text{D}(\mathfrak{a})\int_\Omega |u^-|^2\, dx
\end{align}
where the last inequality follows from the definition in \eqref{eq:definition-lambda1-AD}. Since $\lambda<\lambda_1^\text{D}(\mathfrak{a})$, 
we must have $u^-=0$, that is, $u\geqslant 0$. By Proposition \ref{prop:spectrum-Robin-DtN}, $A_{\beta_1} u=\lambda u$ and then, from 
the Krein-Rutman theorem, we infer that $\lambda=\lambda_1(A_{\beta_1})$, the first eigenvalue of $A_{\beta_1}$. 

Now, choose a number $\beta_0<\beta_1$ and consider the function 
$\beta\in L^\infty(\partial\Omega)$ given by $\beta=\beta_0{\bf 1}_{\Gamma_0}+\beta_1{\bf 1}_{\Gamma_1}$. Since $u|_{\Gamma_0}=0$ we have
\begin{align}
\mathfrak{a}^\beta(u,v)&=\mathfrak{a}(u,v)+\int_{\partial\Omega}\beta uv\,d\sigma=\mathfrak{a}(u,v)+\beta_1\int_{\partial\Omega} uv\,d\sigma
=\lambda\int_{\Omega} uv\,dx
\end{align}
for all $v\in H^1(\Omega)$, which implies that $u\in \mathscr{D}(A_\beta)$ and $A_\beta u=\lambda u$. Again, the Krein-Rutman theorem allows us to infer 
that $\lambda=\lambda_1(A_{\beta})$, the first eigenvalue of $A_\beta$. We have thus shown that $\lambda_1(A_{\beta_1})=\lambda_1(A_{\beta})$; since 
$\beta\leqslant \beta_1$, 
it follows that 
$e^{-tA_{\beta_1}}\leqslant e^{-tA_{\beta}}$ and then, by Proposition \ref{proposition:first_eigenvalue-characterize-operators}, 
that $A_{\beta}=A_{\beta_1}$. Therefore, $\mathfrak{a}^{\beta}=\mathfrak{a}^{\beta_1}$; in 
particular, $\mathfrak{a}^{\beta}(v)=\mathfrak{a}^{\beta_1}(v)$ for all $v\in H^1(\Omega)$, that is,
\[
\int_{\Gamma_0}\beta_0 v^2\,d\sigma+\int_{\Gamma_1}\beta_1 v^2\,d\sigma=\int_{\partial\Omega}\beta_1 v^2\,d\sigma
\]
for all $v\in H^1(\Omega)$. Hence $\int_{\Gamma_0} v^2\,d\sigma=0$ for all $v\in H^1(\Omega)$. Then, by the Stone-Weierstrass 
theorem, $\int_{\partial\Omega} \varphi^2{\bf 1}_{\Gamma_0}\,d\sigma=0$ for all $\varphi\in C(\partial\Omega)$, which implies that the 
Borel measure ${\bf 1}_{\Gamma_0}\,d\sigma$ is zero; this is the same as saying that $\sigma(\Gamma_0)=0$.
\end{proof}

Finally, we prove Theorem \ref{thm:main}\ref{item:semigroup-DtN-domination}. To this end we observe that, alternatively, the operator 
$D_\lambda^\mathscr{A}$ is also associated with an embedded form, namely, the form $\mathfrak{b}_\lambda$ with 
domain $\mathscr{D}(\mathfrak{b}_\lambda)=j(H^1(\Omega))=j(V_j(\mathfrak{a}_\lambda))$ given by
$$
\mathfrak{b}_\lambda(j(u),j(v)):=\mathfrak{a}_\lambda(u,v)\ \ \ (u,v\in V_j(\mathfrak{a}_\lambda)).
$$
We actually prove below a slightly stronger statement, namely, that the domination property 
$0\leqslant e^{-tD_{\lambda_2}^{\mathscr{A}_2}}\leqslant e^{-tD_{\lambda_1}^{\mathscr{A}_1}}$ holds whenever 
$\lambda_i<\kappa\lambda_1^\text{D}-\|d_i^-\|_{L^\infty(\Omega)}$ ($i=1,2$), $\lambda_2\leqslant \lambda_1$ and 
$d_2\geqslant d_1$; this generalizes \cite[Theorem 2.4]{ter2014analysis}. To make the dependence on $d$ explicit, we write $\mathfrak{a}_\lambda^d$ 
instead of $\mathfrak{a}_\lambda$ (and similarly for $\mathfrak{b}_\lambda$). By $e^{-tD_{\lambda_i}^{\mathscr{A}_i}}$ we mean the 
Dirichlet-to-Neumann semigroup with respect to $\mathfrak{a}_{\lambda_i}^{d_i}$ for $i=1,2$.

\begin{proof}[Proof of Theorem \ref{thm:main}\ref{item:semigroup-DtN-domination}]
By \cite[Theorem 2.24]{ouhabaz2005analysis} it is enough to show that 
$\mathfrak{b}_{\lambda_2}^{d_2}(\varphi,\psi)\geqslant \mathfrak{b}_{\lambda_1}^{d_1}(\varphi,\psi)$ for all 
$0\leqslant \varphi,\psi \in j(H^1(\Omega))=j(V_j(\mathfrak{a}_{\lambda_1}^{d_1}))=j(V_j(\mathfrak{a}_{\lambda_2}^{d_2}))$. 

Let $\varphi,\psi$ be as above. There exists $u_1,v_1\in V_j(\mathfrak{a}_{\lambda_1}^{d_1})$ and $u_2,v_2\in V_j(\mathfrak{a}_{\lambda_2}^{d_2})$ 
such that $\varphi=j(u_1)=j(u_2)$ and $\psi=j(v_1)=j(v_2)$. Since $u_1-u_2\in H_0^1(\Omega)$, it follows as in the proof of 
\ref{item:semigroup-DtN-positive}, and by taking into account that we are assuming $b=c$, that 
$$\mathfrak{a}_{\lambda_2}^{d_2}(u_2-u_1,v_2)=-2\Big(\int_\Omega ((b-c)\cdot\nabla v_2)(u_2-u_1)\, dx =0,$$
that is, $\mathfrak{a}_{\lambda_2}^{d_2}(u_2,v_2)=\mathfrak{a}_{\lambda_2}^{d_2}(u_1,v_2)$. Clearly, we have 
$\mathfrak{a}_{\lambda_1}^{d_1}(u_1,v_2)=\mathfrak{a}_{\lambda_1}^{d_1}(u_1,v_1)$. Moreover, as in the proof of \ref{item:semigroup-DtN-irreducible}, we 
can show that $u_1,v_2\geqslant 0$. Thus,
\begin{align}
\mathfrak{b}_{\lambda_2}^{d_2}(\varphi,\psi)&=\mathfrak{a}_{\lambda_2}^{d_2}(u_2,v_2)\\
&=\mathfrak{a}_{\lambda_1}^{d_1}(u_1,v_1)+\int_\Omega (d_2-d_1)u_1v_2\, dx+\int_\Omega (\lambda_1-\lambda_2)u_1v_2\, dx\\
&\geqslant \mathfrak{b}_{\lambda_1}^{d_1}(\varphi,\psi) . 
\end{align}
\end{proof}

Domination properties of the semigroups $e^{-tD_{\lambda}^\mathscr{A}}$ in the nonself-adjoint case $b\neq c$ seem to be more difficult, at 
least under the present hypotheses and by using the methods employed in this paper. Two deeper problems would be (i) to investigate 
the graph $D_\lambda^\mathscr{A}$ when $\lambda\in \sigma(A_\text{D})$ along the lines of \cite{arendt2014dirichlet} 
and \cite{behrndt2015dirichlet} and (ii) to investigate the operator (or graph) $D_\lambda^\mathscr{A}$ on 
rough domains in the spirit of e.g. \cite{arendt2011dirichlet} or \cite{gesztesy2011description}. These questions 
will be investigated elsewhere.

\subsection*{Acknowledgment}

The authors thank the anonymous referee for several suggestions which improved the presentation in many respects.

\bibliographystyle{plain}
\bibliography{biblio-DtN}

\end{document}